\newcommand{\RR}{\mathbb{R}}
\newcommand{\QQ}{\mathbb{Q}}
\newcommand{\amp}{\hbox{Amp}}
\newcommand{\nef}{\hbox{Nef}}
\newcommand{\psef}{\hbox{Psef}}
\newcommand{\bigc}{\hbox{Big}}
\newcommand{\mob}{\hbox{Mob}}
\newcommand{\vis}{\partial\nef(X)_{\rm visible}}
\newcommand{\inte}{\hbox{int}}
\newcommand{\exc}{\hbox{Exc}}
\newcommand{\wt}{\widetilde}
\newcommand{\rom}{\romannumeral}
\newtheorem{theorem}{Theorem}
\newtheorem{lemma}[theorem]{Lemma}
\newtheorem{corollary}[theorem]{Corollary}
\newtheorem{proposition}[theorem]{Proposition}
\newtheorem{remark}[theorem]{Remark}
\newtheorem{definition}[theorem]{Definition}
\newtheorem{convention}{Convention}
\newtheorem{nonumbering}{Theorem}
\newtheorem{nonumberingc}{Corollary}
\begin{document}
\author[Robert Laterveer]
{Robert Laterveer}

\address{Institut de Recherche Math\'ematique Avanc\'ee, Universit\'e 
de Strasbourg, 7 Rue Ren\'e Des\-car\-tes, 67084 Strasbourg CEDEX, France.}
\email{robert.laterveer@math.unistra.fr}

\date{\today}

\keywords{partially ample divisors, ample cone, Fano varieties, minimal model program}

\subjclass{14C20, 14E30, 14J45}

\title{Some remarks on cones of partially ample divisors}

\begin{abstract} We study the cones of $q$-ample divisors $q\amp$ on smooth complex varieties. In favourable cases, we identify a part where the closure $\overline{q\amp}$ and the nef cone have the same boundary. This is especially interesting for Fano (or almost Fano) varieties.
\end{abstract}
\vskip 1cm

\maketitle

Totaro's landmark paper \cite{Tot}  has given a new impetus to the study of partially ample divisors. Let $X$ be a smooth projective complex variety of dimension $n$, and $L$ on $X$ a line bundle.  We recall that $L$ is called $q$-ample if for every coherent sheaf ${\mathcal F}$ there exists an integer $m_0$ such that
  \[ H^i(X,{\mathcal F}\otimes L^{\otimes m}) \hbox{ for all $i>q$ and $m>m_0$.}\]
From Serre's criterion it follows that $0$-ampleness coincides with ampleness.  
Totaro proves that the $q$-ampleness of $L$ only depends on the numerical equivalence class of $L$ \cite[Theorem 8.3]{Tot}. The definition can moreover be extended to $\RR$-divisors \cite[8.2]{Tot}, in such a way that
$q$-ample $\RR$-divisors form an open cone $q\amp(X)$ in $N^1(X)$ (the space of $\RR$-divisors modulo numerical equivalence). We thus get a series of cones
\[  \amp(X)=0\amp(X)\subset 1\amp(X)\subset\cdots   \subset n\amp(X)=N^1(X)\ .\]
While the ample cone $\amp(X)$ and the cone $(n-1)\amp(X)$ are fairly well understood, the intermediate cones $q\amp(X)$ for $0<q<n-1$ are still quite elusive and mysterious (see for instance \cite[section 11]{Tot} for some fundamental open questions).

The modest goal of this paper is to identify a part of these cones $q\amp$. Indeed, it turns out that in favourable cases, part of the boundary of the closed cone $\overline{q\amp}$ coincides with the boundary of the nef cone. To start with, let's restrict attention to the case that is easiest to state, that of the cone of $1$-ample divisors $1\amp$. Let $\partial\nef(X)$ denote the boundary of the nef cone, and let $K_X\in N^1X$ denote the class of the canonical divisor.
We define
  \[  \partial\nef(X)_{\rm visible}\subset \partial\nef(X)\]
  to be the part of the boundary that is visible from $K_X$; cf. Definition \ref{visible} for the precise definition. (We note that when $K_X$ is nef, we have $\partial\nef(X)_{\rm visible}=\emptyset$ !)
  
This ``$K_X$--visible part'' of the boundary turns out to be closely related to the boundary of $1\amp(X)$. This is detailed in the following result, where $\mob(X)$ and $\bigc(X)$ denote the cone of mobile divisors resp. big divisors.
  
\begin{nonumbering}{(=Theorem \ref{1amp})} Let $X$ be a smooth projective complex variety. 

\item{(\rom1)} 
\[\partial\nef(X)_{\rm visible}\cap\inte\bigl(\mob(X)\bigr)\] is in the boundary of $\overline{1\amp(X)}$.

\item{(\rom2)} Suppose $X$ is not the blow--up of a smooth projective variety along a smooth codimension $2$ subvariety. Then
  \[  \vis\cap\bigc(X) \subset \partial\overline{1\amp(X)}\ .\]
  
\item{(\rom3)} Suppose $X$ is not a conic bundle over a smooth projective variety, nor a blow--up of a smooth projective variety along a smooth codimension $2$ subvariety. Then
  \[  \vis \subset \partial\overline{1\amp(X)}\ .\]
\end{nonumbering}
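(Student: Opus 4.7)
\emph{Plan.} Since $\nef(X)\subset\overline{\amp(X)}\subset\overline{1\amp(X)}$, every class $D$ in the $K_X$-visible part of $\partial\nef(X)$ automatically belongs to $\overline{1\amp(X)}$, so in each of (i)--(iii) the task reduces to showing that $D$ is not an interior point of $1\amp(X)$. The strategy is uniform: unwind Definition \ref{visible} to produce a $K_X$-negative extremal ray $R$ with $D\cdot R=0$, form the Mori--Kawamata contraction $\pi_R:X\to Y$, and use $1$-ampleness of $D$ to constrain the geometry of $\pi_R$ until it contradicts the hypothesis of the relevant part.

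Concretely, the cone theorem together with elementary convex geometry identify $\vis$ with the set of $D\in\partial\nef(X)$ for which there exists a $K_X$-negative extremal ray $R\subset\NE(X)$ with $D\cdot R=0$ (moving from $D$ infinitesimally toward $K_X$ leaves $\nef(X)$ precisely when such an $R$ witnesses it). The contraction theorem then yields a morphism $\pi=\pi_R:X\to Y$ contracting exactly the curves of class in $R$. Assuming for contradiction that $D\in 1\amp(X)$: every curve in a positive-dimensional fiber $F$ of $\pi$ has class in $R$, so $D|_F$ is numerically trivial; by the standard numerical criterion for nef divisors (a nef $q$-ample divisor restricts non-numerically-trivially to every subvariety of dimension $>q$), this forces $\dim F\le 1$. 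Hence every fiber of $\pi$ has dimension at most one, and a classical theorem of Ando classifies such extremal contractions of a smooth variety as either (a) a conic bundle over a smooth base $Y$, or (b) the blow-up of a smooth codimension-$2$ subvariety inside a smooth $Y$.

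The conclusion is case by case. For (iii) both (a) and (b) are forbidden by hypothesis, and we are done. For (ii), bigness of $D$ excludes (a): a standard descent argument identifies a nef class $D$ with $D\cdot R=0$ as a pullback $\pi^*A$ from $Y$, forcing $D^n=0$ (since $\dim Y=n-1$ in the fiber-type case) and contradicting $D\in\bigc(X)$; case (b) is then excluded by assumption. For (i), $D\in\inte\mob(X)$ implies $D$ is big (for $H$ ample and $\epsilon>0$ small, $D-\epsilon H\in\mob(X)\subset\psef(X)$, so $D$ is the sum of a pseudo-effective class and an ample class), again ruling out (a); in case (b) the exceptional divisor $E$ satisfies $E\cdot R<0$, and the projection-formula identity $\pi_*\mathcal{O}_X(mtE)=\mathcal{O}_Y$ shows that $mtE$ appears as a fixed component of $|m(D+tE)|$ for every $t>0$, so $D+tE\notin\mob(X)$ and $D$ lies on $\partial\mob(X)$, contradicting $D\in\inte\mob(X)$.

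The principal technical obstacle is the irrationality of $D$: the descent $D=\pi_R^*A$ and the projection-formula argument in part (i) are cleanest when $D$ is rational, where Kawamata's base-point-free theorem applies directly. For a general real class we reduce to this situation by approximation. Rational classes are dense on the face of $\nef(X)$ perpendicular to $R$ (which is a rationally defined face), and since $1\amp(X)$, $\bigc(X)$, and $\inte\mob(X)$ are all open cones, suitable rational approximants inherit whichever hypothesis is being used in the case at hand, and the argument above applies to them verbatim.
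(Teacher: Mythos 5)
Your overall strategy is viable and genuinely different from the paper's (a single Mori contraction plus the Ando/Wi\'sniewski classification of Theorem \ref{dim1}, instead of the BCHM adjoint MMP, stability and Lemma \ref{indet}; the key mechanism --- a $1$-ample class restricted to a fibre of dimension $\ge 2$ violating Totaro's Theorem \ref{n-1} --- is the same). But there is a genuine gap at the very first step. You claim that ``the cone theorem together with elementary convex geometry'' identifies $\vis$ with the set of boundary classes orthogonal to some $K_X$-negative extremal ray. For \emph{rational} $D$ this is correct: writing $D_t=(1-t)D+tK_X$, nefness of $D_t$ can only fail on $K_X$-negative extremal rays $R_i=\RR_{\ge0}[C_i]$ with $0<-K_X\cdot C_i\le 2n$, and if $D\cdot C_i\ge 1$ for all $i$ then $D_t$ is nef for $t\le 1/(1+2n)$, contradicting visibility. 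Theorem \ref{1amp}, however, concerns $\RR$-classes, and for irrational $D$ the relevant infimum of $D\cdot C_i/(-K_X\cdot C_i)$ could be $0$ without being attained: the cone theorem only gives local discreteness of the rays in the half-space $K_X<0$, so they may accumulate on $K_X^{\perp}$, and the limiting curve class $\gamma$ satisfies $D\cdot\gamma=K_X\cdot\gamma=0$, which yields no contractible ray. What you actually need here is Kawamata's theorem that the nef boundary is locally rational polyhedral along the visible part (\cite[Theorem 1]{Kaw}, which the paper quotes in the proof of Corollary \ref{convex}), or else the paper's own device: perturb a visible class into the form $K_X+A$ with $A$ ample and extract the extremal contraction from the $(K_X+A)$-MMP of \cite{BCHM} rather than from convex geometry. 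Your final approximation paragraph does not repair this, since it presupposes that $R$ and the rational hyperplane $R^{\perp}$ already exist --- which is exactly what is missing for irrational $D$. (Once $R$ is granted, that paragraph is fine, and in fact you only need rational approximants of $D$ in $R^{\perp}$, where density is clear, rather than in the face $\nef(X)\cap R^{\perp}$; nefness of the approximant is never used.)

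A second, smaller gap is in case (b) of part (\rom1): showing that $mtE$ is a fixed component of $|m(D+tE)|$ proves that $D+tE$ is not itself a mobile class, but $\mob(X)$ is the \emph{closed convex cone generated by} mobile classes, so this does not yet exclude $D+tE$ from $\mob(X)$ (also $D$ is only a numerical $\RR$-class, so arguing with one linear system is delicate). The conclusion is correct and easy to repair: $(D+tE)\vert_E=(\pi^{\ast}A+tE)\vert_E$ has degree $-t<0$ on the ruling curves of $E$, which cover $E$, so $(D+tE)\vert_E$ is not pseudo-effective, whereas every class in $\mob(X)$ restricts to a pseudo-effective class on each prime divisor; equivalently, $E\subset B_-(D+tE)$, and one can invoke the descriptions recalled in Remarks \ref{mobile} and \ref{Cho}. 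Finally, a cosmetic point: the statement asks that $D$ not lie in the interior of $\overline{1\amp(X)}$, not merely that $D\notin 1\amp(X)$; these agree because $\overline{1\amp(X)}+\amp(X)\subset 1\amp(X)$ by \cite[Theorem 8.3]{Tot}, and it is worth saying so.
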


That is, with two exceptions (a blow--up and a conic bundle) the ample cone and the $1$-ample cone look exactly the same when observed from $K_X$, and hence the only places where $1\amp$ can grow larger than $\amp$ are located in the ``shadowy part'' invisible from $K_X$. This theorem is proven by exploiting the existence of an MMP for any adjoint divisor, as proven by Birkar--Cascini--Hacon--McKernan \cite{BCHM}.

It follows from Theorem \ref{1amp} that the cone $1\amp(X)$ is strictly convex for any $X$ such that $\vis\cap\inte\bigl(\mob(X)\bigr)\not=\emptyset$ (Corollary \ref{convex}).
The following is also an immediate corollary:

\begin{nonumberingc}{(=Corollary \ref{kx1amp})} Let $X$ be a smooth projective variety, and suppose $K_X$ is $1$-ample. Then
  \[ \vis\subset \partial\overline{\mob(X)}\ .\]
\end{nonumberingc}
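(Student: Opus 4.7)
My plan is to argue by contradiction, using Theorem~\ref{1amp}(i) to reduce the claim to a statement about the open $1$-ample cone. Fix $D\in\vis$. Nef classes are limits of ample (hence mobile) classes, so $D\in\overline{\mob(X)}$, and it suffices to rule out $D\in\inte\bigl(\overline{\mob(X)}\bigr)=\inte\bigl(\mob(X)\bigr)$. Suppose for contradiction that $D\in\inte\bigl(\mob(X)\bigr)$. Then Theorem~\ref{1amp}(i) gives $D\in\partial\overline{1\amp(X)}$. The strategy is to exploit the hypothesis $K_X\in 1\amp(X)$ to show that in fact $D\in 1\amp(X)$, which directly contradicts $D\in\partial\overline{1\amp(X)}$ since an open cone cannot meet the boundary of its own closure.

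Two ingredients enter the key step. First, a geometric consequence of visibility: since the segment from $K_X$ to $D$ meets $\overline{\nef(X)}$ only at $D$, convexity of $\nef(X)$ forces the ray from $K_X$ through $D$, continued slightly past $D$, to enter $\inte\bigl(\nef(X)\bigr)=\amp(X)$. After rescaling, this translates to $D-sK_X\in\amp(X)$ for all sufficiently small $s>0$. Second, a standard consequence of Totaro's theory: the cone $1\amp(X)$ is open, invariant under positive scaling, and preserved under addition of an ample class; in particular, $sK_X+A\in 1\amp(X)$ for every $s>0$ and every ample $A$.

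To deduce $D\in 1\amp(X)$, I show that a whole neighborhood of $D$ consists of $1$-ample classes. Given a small perturbation $v\in N^1(X)$, I write
\[
D+v=sK_X+\bigl(D-sK_X+v\bigr).
\]
Fixing $s>0$ small enough that $D-sK_X$ is ample and then taking $v$ sufficiently small (depending on $s$), the second summand stays ample by openness of $\amp(X)$. Hence $D+v=sK_X+A'$ for some ample $A'$, which is $1$-ample by the preceding paragraph. Therefore $D$ has an open neighborhood in $1\amp(X)$, yielding the required contradiction.

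The main obstacle I anticipate is confirming, from the precise Definition~\ref{visible}, that visibility of $D$ from $K_X$ really forces $D-sK_X\in\amp(X)$ for all sufficiently small $s>0$; the remaining ingredients rely only on Theorem~\ref{1amp}(i) and on the standard openness and additivity properties of the $q$-ample cone established by Totaro.
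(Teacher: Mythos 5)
Your overall scheme---reduce to $D\in\inte\bigl(\mob(X)\bigr)$, invoke Theorem \ref{1amp}(i) to place $D$ in $\partial\overline{1\amp(X)}$, and then derive a contradiction by writing $D$ as (positive multiple of $K_X$) plus (ample), hence $1$-ample---is exactly the paper's, but the step you yourself flag as the anticipated obstacle is a genuine gap. Visibility of $D$ from $K_X$ does \emph{not} imply that the ray from $K_X$ through $D$, continued past $D$, enters $\amp(X)$; convexity alone does not give this when $D$ lies on a face of $\nef(X)$ of codimension $\ge 2$ (equivalently, on the relative boundary of $\vis$). A purely convex-geometric counterexample: let $\nef$ be the first octant in $\RR^3$, $K_X=(1,-1,1)$ and $D=(1,0,0)$. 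The segment $\overline{K_XD}$ meets the octant only at $D$ (its second coordinate is negative for $t<1$), so $D$ is visible, yet $D-sK_X=(1-s,\,s,\,-s)$ has negative third coordinate for every $s>0$, so it is never nef, let alone ample. Thus the decomposition $D+v=sK_X+A'$ with $A'$ ample is unavailable for such $D$, and your proof breaks precisely there. (When $-K_X$ is ample the claim is trivially true, which is why it feels plausible, but the corollary is stated for arbitrary $X$ with $K_X$ $1$-ample.)

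The paper's proof repairs exactly this point by a preliminary reduction: since $\partial\overline{\mob(X)}$ is closed and $\vis\subset\overline{\vis^{\circ}}$, it suffices to treat $L$ in the relative interior $\vis^{\circ}$, and for such $L$ the definition of the visible part does give $L=m(K_X+A)$ with $A$ ample and $m>0$; then $L$ is a sum of ample and $1$-ample, hence $1$-ample, contradicting Theorem \ref{1amp}(i). If you insert this reduction to $\vis^{\circ}$ (and justify the representation there, as in the proof of Theorem \ref{1amp}), your argument goes through; note also that once you have $D=sK_X+A'$ with $A'$ ample, $D$ is $1$-ample directly by Totaro's additivity, so the surrounding neighborhood argument is superfluous.
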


That is, if $K_X$ is $1$-ample the nef cone and the closed mobile cone look the same when observed from $K_X$.

 Of course, the above theorem is empty of content when $K_X$ is nef (for then the $K_X$--visible part is empty), while the assertion grows stronger when $K_X$ grows more negative (for then the $K_X$--visible part grows larger, which means that the $1$-ample cone looks more and more like the ample cone). The limit case is when $X$ is a Fano variety: then the whole boundary of $\nef(X)$ is $K_X$--visible. In fact, we can prove more generally:

\begin{nonumberingc}{(=Corollary \ref{fano})} Let $X$ be a smooth projective complex variety such that either (1) $-K_X$ is ample, or (2) $-K_X$ is $\not=0$ and nef and $\dim N^1X\ge 3$. Then:

\item{(\rom1)} 
\[\partial\nef(X)\cap\inte\bigl(\mob(X)\bigr)\] is in the boundary of $\overline{1\amp(X)}$.

\item{(\rom2)} Suppose $X$ is not the blow--up of a smooth projective variety $Y$ along a smooth codimension $2$ subvariety. Then
  \[  \partial\nef(X)\cap\bigc(X) \subset \partial\overline{1\amp(X)}\ .\]
  
\item{(\rom3)} Suppose $X$ is not a conic bundle over a smooth projective variety $Y$, nor a blow--up of a smooth projective variety along a smooth codimension $2$ subvariety. Then
  \[  \amp(X)=1\amp(X)\ .\]
\end{nonumberingc}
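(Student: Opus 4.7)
The plan is to deduce Corollary \ref{fano} from Theorem \ref{1amp}. The key point is that under hypothesis (1) or (2), the entire nef boundary is $K_X$-visible, i.e.\ $\vis=\partial\nef(X)$; once this is verified, parts (i) and (ii) translate verbatim from Theorem \ref{1amp}(i), (ii).

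To establish $\vis=\partial\nef(X)$, I would argue directly from Definition \ref{visible}: for $P\in\partial\nef(X)$, the open segment $(K_X,P)$ must be shown disjoint from $\amp(X)$. If some convex combination $tP+(1-t)K_X$ with $t\in(0,1)$ lay in $\amp(X)$, then in case (1), writing $-K_X=A\in\amp(X)$, we would obtain $tP=A'+(1-t)A\in\amp(X)+\amp(X)=\amp(X)$, hence $P\in\amp(X)$, contradicting $P\in\partial\nef(X)$. Case (2) is identical, using $-K_X\in\nef(X)\setminus\{0\}$ and the standard fact $\amp(X)+\nef(X)\subset\amp(X)$. The hypotheses $-K_X\ne 0$ and $\dim N^1X\ge 3$ in case (2) serve to ensure that the $K_X$-visible set is dense in $\partial\nef(X)$, avoiding the low-dimensional degenerate locus along the ray $\RR_{>0}(-K_X)$.

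For part (iii), Theorem \ref{1amp}(iii) combined with the above yields $\partial\nef(X)\subset\partial\overline{1\amp(X)}$. The inclusion $\amp(X)\subset 1\amp(X)$ is automatic, so to conclude I would argue the reverse by contradiction: suppose $D\in 1\amp(X)\setminus\amp(X)$ and pick any $A\in\amp(X)\subset 1\amp(X)$. Since $1\amp(X)$ is an open convex cone \cite{Tot}, the open segment $(A,D)$ lies in $1\amp(X)=\inte\overline{1\amp(X)}$; but travelling from $\inte\nef(X)$ to the complement of $\inte\nef(X)$, this segment must cross $\partial\nef(X)\subset\partial\overline{1\amp(X)}$, yielding a point lying simultaneously in the interior and on the boundary of $\overline{1\amp(X)}$, the desired contradiction.

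The main obstacle I anticipate is the case (2) verification of $\vis=\partial\nef(X)$: with $K_X$ only on the boundary of $-\nef(X)$, the ray $\RR_{>0}(-K_X)$ is a potentially exceptional locus where the segment from $K_X$ passes through the apex of the cone before reaching the boundary. The hypothesis $\dim N^1X\ge 3$ presumably confines any failure of visibility to a codimension-at-least-two set, after which a closure argument extends the inclusion from $\vis$ to all of $\partial\nef(X)$.
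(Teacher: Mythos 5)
Your reduction of case (1) to Theorem \ref{1amp} is fine, and the segment trick in your step (iii) (a crossing point of $\partial\nef(X)$ that is a sum of an ample and a $1$-ample class, hence $1$-ample, contradicting openness of $1\amp(X)$) is exactly the right tool. But there is a genuine gap in case (2): the claim $\vis=\partial\nef(X)$ is false there. If $-K_X$ is nef but not ample, then $-K_X\in\partial\nef(X)$, and no point $D=\lambda(-K_X)$, $\lambda>0$, of the ray $k:=\RR_{>0}(-K_X)$ is visible: the segment $\overline{K_X D}$ lies on the line $\RR\cdot K_X$ and meets $\nef(X)$ in the whole subsegment from $0$ to $D$, not only in $D$. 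Your convexity argument only shows that the open segment avoids $\amp(X)$ (via $\amp(X)+\nef(X)\subset\amp(X)$), whereas Definition \ref{visible} demands that it meet $\nef(X)$ in the single point $D$; avoiding the ample cone is not enough. This is why the paper records $\vis=\partial\nef(X)\setminus k$ in case (2): the whole content of the corollary beyond Theorem \ref{1amp} is to handle the missing ray $k$, and your proposal does not do this. Your suggested repair (a density/closure argument, with $\dim N^1X\ge 3$ ``confining the failure of visibility to codimension two'') is not carried out and is not the role this hypothesis plays; moreover the density premise itself is not automatic, since if $-K_X$ sat in the relative interior of a higher-dimensional face of $\nef(X)$, visibility would fail on a relatively open subset of the boundary.

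For comparison, the paper's mechanism for the ray is concrete: if some point of $k$ (lying in $\inte\bigl(\mob(X)\bigr)$, resp. $\bigc(X)$, resp. arbitrary) were $1$-ample, openness of $1\amp(X)$ gives $D=-K_X-\epsilon A\in 1\amp(X)$ for $A$ ample and $\epsilon$ small, and $D\notin\nef(X)$; choosing an ample $A^\prime$ off the $2$-plane spanned by $A$ and $k$ (this is where $\dim N^1X\ge 3$ enters), the segment from $A^\prime$ to $D$ crosses $\partial\nef(X)\setminus k$ at a point $B$ still inside the relevant cone, so $B\in\partial\overline{1\amp(X)}$ by Theorem \ref{1amp}; but $B$ is a sum of an ample and a $1$-ample class, hence $1$-ample, contradicting openness of $1\amp(X)$. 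You would need an argument of this kind (you already use its analogue in your step (iii), but only after assuming the full inclusion $\partial\nef(X)\subset\partial\overline{1\amp(X)}$, which is what is missing). A minor further point: you invoke convexity of $1\amp(X)$, which is not known in general; what you actually need is only that ample plus $1$-ample is $1$-ample, as in \cite[Theorem 8.3]{Tot}.
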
   

(For Fano varieties, I proved this in \cite{Lat}).

Here is an application of the above theorem: we can identify a part of the nef cone for which the weak Lefschetz principle holds. Let $Y\subset X$ be a generic hyperplane section. If the dimension $n$ of $X$ is $\ge 4$, pull-back induces a natural isomorphism $N^1X\cong N^1Y$. Thus it makes sense to ask whether the nef cones $\nef(X)$ and $\nef(Y)$ coincide. The answer is negative in general, as shown by Hassett--Lin--Wang \cite{Has}. On the other hand, the answer is positive for certain Fano varieties (\cite{Wis}, \cite{Has}, \cite{Jow}, \cite{ANO}, \cite{BI}, \cite{Sz}). Using the above Theorem, it turns out that the $K_X$--visible part cuts out a part where weak Lefschetz holds for the nef cone:

\begin{nonumberingc}{(=Corollary \ref{weak})} Let $X$ be a smooth projective complex variety of dimension $n\ge 4$, and let $Y\subset X$ be any ample hypersurface. Then
\[\vis\cap\inte\bigl(\mob(X)\bigr)\subset  \partial\nef(Y)\cap\partial\nef(X)\ .\]

\end{nonumberingc}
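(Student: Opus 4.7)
The plan is to split the conclusion into its two factors, $D\in\partial\nef(Y)\cap\partial\nef(X)$. The second factor is tautological since $\vis\subset\partial\nef(X)$, so the substantive content is $D\in\partial\nef(Y)$. First I would verify $D\in\nef(Y)$: the hypothesis says $D$ is nef on $X$, so its pullback $D|_Y$ is nef on $Y$, and under the Lefschetz isomorphism $N^1X\cong N^1Y$ -- which is exactly what the assumption $n\ge 4$ provides -- this places $D$ inside $\nef(Y)$. The remaining task is therefore to show $D\notin\amp(Y)$.

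For that, I would invoke Theorem \ref{1amp}(i) to upgrade the hypothesis $D\in\vis\cap\inte\bigl(\mob(X)\bigr)$ to the statement $D\in\partial\overline{1\amp(X)}$, and then extract geometric content by running the $K_X$-MMP (available via Birkar--Cascini--Hacon--McKernan). Visibility of $D$ from $K_X$ supplies a $K_X$-negative extremal ray $R\subset\NE(X)$ with $D\cdot R=0$. The hypothesis $D\in\inte\bigl(\mob(X)\bigr)$ rules out $R$ being of fiber type: by Boucksom--Demailly--Paun--Peternell duality, classes in the interior of the movable cone pair strictly positively with every movable curve class, and the $R$-curves of a fiber-type contraction cover $X$ and are therefore movable. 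So the contraction of $R$ is divisorial or small, and the covering family of $R$-curves inside its exceptional locus should have enough parameter-dimension (using $n\ge 4$) to contain a member lying entirely in the ample hypersurface $Y$; such a curve $C\subset Y$ satisfies $D\cdot C=0$ and obstructs the ampleness of $D|_Y$.

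The hard part will be this last geometric step: producing a member of the $R$-covering family actually contained in $Y$, not merely meeting $Y$. This is where the two hypotheses $n\ge 4$ and $D\in\inte\bigl(\mob(X)\bigr)$ must combine to exclude the degenerate geometries -- codimension-$2$ blow-ups and conic bundles -- in which the $R$-family consists of rigid $\pp^1$'s that an ample hypersurface can generically miss, and also to handle small contractions whose exceptional locus might itself be low-dimensional. Verifying this exclusion cleanly is where the technical burden lies.
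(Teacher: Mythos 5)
Your reduction is fine as far as it goes (the $\partial\nef(X)$ factor is tautological, nefness of $D\vert_Y$ is clear, and the fiber-type exclusion via duality with movable curves is correct), but the crux of your argument is exactly the step you leave open, and as you have formulated it it is the wrong target. You ask for a member of the $R$-covering family of rational curves lying \emph{inside} $Y$; there is no reason an ample hypersurface (even a generic one) must contain a member of a family of curves that sweeps out only a low-dimensional locus, and for curves of high degree with respect to $Y$ the expected dimension count is negative. What actually saves the argument is different, and it is forced by the hypothesis $D\in\inte\bigl(\mob(X)\bigr)$: by Remark \ref{mobile} this cone is $B(n-2)\amp(X)$, so $\dim B_+(D)\le n-2$; a divisorial supporting contraction is then impossible (writing $D=\psi^\ast D_Z$ and applying Proposition \ref{bbp} would force the exceptional divisor into $B_+(D)$), so the contraction of $R$ is small, and by Theorem \ref{dim1} a small $K_X$-negative contraction must have a fiber with an irreducible component $F$ of dimension $\ge 2$. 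Then $Y\cap F$ has dimension $\ge 1$, and \emph{any} curve in $F$ is contracted, hence $D$-trivial; it need not belong to your covering family. Without this (or some substitute), your proposal is not a proof: the ``hard part'' you flag is genuinely missing, and the cases you worry about (codimension-$2$ blow-ups, conic bundles) are red herrings in part (\rom1), since they are already excluded by the movable-interior hypothesis and the fiber-type exclusion. A secondary, smaller gap: the existence of a $K_X$-negative extremal ray $R$ with $D\cdot R=0$ needs justification (via the cone theorem and the representation of visible boundary points as $m(K_X+A)$, as in the reduction step in the proof of Theorem \ref{1amp}).

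Note also that the paper's own proof bypasses all of this geometry: it combines Theorem \ref{1amp}(\rom1) with the Demailly--Peternell--Schneider/K\"uronya vanishing (Proposition \ref{inclusions}): if $D\vert_Y$ were ample for an ample hypersurface $Y$, then $D$ would be $1$-ample on $X$, contradicting $D\in\partial\overline{1\amp(X)}$ since $1\amp(X)$ is open; the hypothesis $n\ge 4$ serves only to identify $N^1X\cong N^1Y$ so that the conclusion can be read inside $N^1X$. So your route is repairable but substantially longer, and as submitted it does not close the decisive step.
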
   

This is proven using a result of Demailly--Peternell--Schneider \cite{DPS} (cf. also \cite{Kur}), which says that a divisor restricting to an ample divisor on $Y$ is $1$-ample on $X$.

We prove a result similar to Theorem \ref{1amp}, by similar means, for the $q$-ample cone (where $q$ may be $>1$). This result is a bit more awkward to state. As a matter of notation, we introduce the cone $Bq\amp(X)$; this is defined as the cone of those $\RR$-divisors which have augmented base locus of dimension $\le q$.

\begin{nonumbering}{(=Theorem \ref{qample})} Let $X$ be a smooth projective variety of dimension $n$.  For any non--negative integer $q$, we have

\[   \vis\cap B(n-1-q)\amp(X)\subset \partial\overline{q\amp(X)}\ .\]
\end{nonumbering}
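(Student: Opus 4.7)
The plan is to take an arbitrary $D\in\vis\cap B(n-1-q)\amp(X)$ and establish both $D\in\overline{q\amp(X)}$ and $D\notin q\amp(X)$; since $q\amp(X)$ is open, the two together force $D\in\partial\overline{q\amp(X)}$. The first membership is automatic from $D\in\partial\nef(X)\subset\nef(X)=\overline{\amp(X)}\subset\overline{q\amp(X)}$. The hypothesis $\dim B_+(D)\le n-1-q<n$ also forces $D$ to be big, for otherwise $B_+(D)$ would equal $X$.

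The substance lies in showing $D\notin q\amp(X)$. I would unpack $K_X$-visibility (Definition \ref{visible}) into the statement that the face $D^\perp\cap\NE(X)$ of $\NE(X)$ contains a class of negative $K_X$-degree; the Cone Theorem decomposition of $\NE(X)$ into its $K_X$-nonnegative part and its $K_X$-negative extremal rays then produces an extremal ray $R\subset D^\perp\cap\NE(X)$ with $K_X\cdot R<0$. Invoking the Contraction Theorem, $R$ can be contracted to an elementary Mori contraction $\phi=\phi_R\colon X\to Y$; since $D$ is nef with $D\cdot R=0$, it factors as $D=\phi^*D'$ for some nef class $D'$ on $Y$, and bigness of $D$ rules out the Mori fibre case, so $\phi$ must be birational.

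Set $E=\exc(\phi)$ and let $F$ be a fibre of $\phi$ over a point of $\phi(E)$. Since $D|_F\equiv 0$, Nakamaye's theorem (applicable to the nef and big $D$) places $F$ inside $B_+(D)$; in particular $E\subset B_+(D)$, so $\dim E\le n-1-q$. The Wi\'sniewski--Ionescu fibre-dimension inequality for elementary $K_X$-negative contractions then yields
\[\dim F+\dim E\ge\dim X+\ell(R)-1\ge n,\]
whence $\dim F\ge n-\dim E\ge q+1$. A Sommese-type argument, showing that a line bundle pulled back under a surjective proper morphism with a fibre of dimension $d$ is never $(d-1)$-ample (e.g.\ by testing $q$-ampleness against the coherent sheaf $i_*\omega_F$, where $i\colon F\hookrightarrow X$ is the inclusion, and using $H^{\dim F}(F,\omega_F)\ne 0$ together with the triviality of numerically trivial line bundles on Fano-type fibres of Mori contractions), then rules out $D=\phi^*D'$ being $q$-ample.

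I expect the Wi\'sniewski--Ionescu step to be the crux: the length bound $\ell(R)\ge 1$ is available only because of the $K_X$-negativity of $R$ supplied by the visibility hypothesis, and it is this inequality that converts the small-augmented-base-locus condition $\dim B_+(D)\le n-1-q$ into a subvariety of $B_+(D)$ of dimension at least $q+1$. The remaining inputs---Nakamaye's description of $B_+(D)$ as a null locus, Sommese's $q$-ampleness obstruction from large fibres, and the MMP foundations (Cone/Contraction Theorem)---are standard and combine cleanly once the fibre-dimension lower bound is in hand.
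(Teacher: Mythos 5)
The crux of your argument is the very first substantive step, and it is exactly there that the proof has a genuine gap: you claim that $D\in\vis$ forces the face $D^\perp\cap\NE(X)$ to contain a class of negative $K_X$-degree, hence (via the Cone Theorem) a $K_X$-negative extremal ray $R$ with $D\cdot R=0$ that you can then contract. Unwinding Definition \ref{visible} only gives, for every $t\in(0,1]$, a $K_X$-negative extremal ray $R_t$ with $\bigl((1-t)D+tK_X\bigr)\cdot R_t<0$, i.e.\ $0\le D\cdot C_t<\tfrac{t}{1-t}\,(-K_X\cdot C_t)\le\tfrac{t}{1-t}(n+1)$ for a minimal generator $C_t$. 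Since infinitely many distinct rays may occur (the Cone Theorem lets $K_X$-negative extremal rays accumulate onto $K_X^{\perp}$, with bounded length but unbounded degree), passing to the limit only yields a class $z\in D^\perp\cap\NE(X)$ with $K_X\cdot z\le 0$; in the accumulation scenario ($D\cdot C_i\to 0$ but never $0$, normalized classes converging to a $K_X$-trivial $z$) one gets $K_X\cdot z=0$, $z$ need not span an extremal ray, and there is no $K_X$-negative ray in $D^\perp$ at all. Nothing in your hypotheses ($D$ big and nef with $\dim B_+(D)\le n-1-q$) visibly excludes this, and since $D$ is only an $\RR$-class you cannot argue that $D\cdot C_t$ is eventually $0$ by integrality. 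Without the ray, the entire chain (contraction $\phi_R$, $D=\phi^*D'$, big fibre, non-$q$-ampleness) never gets started.

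The paper's proof is built precisely to sidestep this: it never looks for an extremal ray in $D^\perp$. It first reduces, using openness of $q\amp(X)$ and of the cones $Bq\amp(X)$, density of stable divisors, and the fact that points of the relative interior $\vis^{\circ}$ are (up to scaling) of the form $K_X+A$ with $A$ ample, to a stable adjoint divisor $L=K_X+A$ lying in $B(n-1-q)\amp(X)\cap q\amp(X)$; it then runs an honest $L$-MMP supplied by \cite{BCHM}. The first contraction of that MMP is automatically $K_X$-negative (since $L-K_X$ is ample), its exceptional locus lies in $B_+(L)$ by Lemma \ref{indet} (via \cite{BBP}, this is where stability is used), and Wi\'sniewski's inequality together with Totaro's Theorem \ref{n-1} finish the job --- which is the same endgame as yours: your use of Nakamaye's theorem in place of Lemma \ref{indet} is legitimate (quote it in its $\RR$-divisor form), and your Sommese-type cohomological step can be replaced by the one-line remark that $D\vert_F\equiv 0$ lies in $-\psef(F)$, hence is not $(\dim F-1)$-ample, so not $q$-ample when $\dim F\ge q+1$. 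One further small point: ``$D\in\overline{q\amp(X)}$, $D\notin q\amp(X)$, $q\amp(X)$ open'' does not formally give $D\in\partial\overline{q\amp(X)}$; you need $\inte\bigl(\overline{q\amp(X)}\bigr)=q\amp(X)$, which does hold because $\overline{q\amp(X)}+\amp(X)\subset q\amp(X)$, but this should be said. To repair your proof you must either justify the existence of the extremal ray in $D^\perp$ under your hypotheses, or adopt the paper's perturbation-plus-MMP mechanism.
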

 
Here is how this paper is organized. The first two sections are of a preliminary nature. The first concerns several cones of divisors related to the $q$-ample cones; the second contains some results about contractions that will be needed. Section $3$ contains the proof of Theorem \ref{1amp} and its corollaries. In section $4$, we prove Theorem \ref{qample}. 

Helpful conversations with Gianluca Pacienza are gratefully acknowledged.

\begin{convention} In this paper, all varieties will be (quasi--)projective algebraic varieties defined over the complex numbers.
\end{convention}

\section{Cones}

This section contains notation and basic results concerning several cones of divisors related to the $q$-ample cones. These cones have been introduced by K\"uronya \cite{Kur} and de Fernex--K\"uronya--Lazarsfeld \cite{FKL}.

\begin{definition} Let $X$ be a projective variety.
A line bundle $L$ on $X$ is called $q$-ample if for every coherent sheaf ${\mathcal F}$ there exists an integer $m_0$ such that
  \[ H^i(X,{\mathcal F}\otimes L^{\otimes m}) \hbox{ for all $i>q$ and $m>m_0$.}\]
A $\QQ$--Cartier divisor is called $q$-ample if some integral multiple is $q$-ample. An $\RR$--Cartier divisor $D$ is called $q$-ample if it can be written as a sum 
  \[ D=cL+A\ ,\]
where $c\in\RR_{>0}$, $L$ is a $q$-ample line bundle and $A$ is an ample $\RR$--Cartier divisor. We will denote
  \[  q\amp(X)\subset N^1(X)\]
  the cone generated by $q$-ample divisors.
\end{definition}

\begin{remark} The consistency of the definition for $\RR$--divisors with the one for $\QQ$--divisors is proven by Totaro
\cite[Theorem 8.3]{Tot}. The cones $q\amp(X)$ are open cones \cite[Theorem 8.3]{Tot}.
\end{remark}  

\begin{theorem}{(\cite[Theorem 9.1]{Tot})}\label{n-1} Let $X$ be a projective variety of dimension $n$. The cone $(n-1)\amp(X)$ is the complement in $N^1X$ of the negative of the pseudo--effective cone of $X$. 
\end{theorem}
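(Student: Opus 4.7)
The plan is to prove that $L \in (n-1)\amp(X)$ if and only if $-L \notin \overline{\psef}(X)$, by translating $(n-1)$-ampleness through Serre duality into an asymptotic non-effectivity statement and then matching that with the non-pseudo-effectivity of $-L$.

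First I would simplify the defining condition. Since $\dim X = n$, the vanishing $H^i(X, \mathcal F \otimes L^{\otimes m})=0$ demanded for $i>n-1$ only bites at $i=n$. For any coherent $\mathcal F$ and any sufficiently positive ample line bundle $A$, Serre's theorem produces a surjection $\mathcal O_X(-A)^{\oplus N} \twoheadrightarrow \mathcal F$, whose kernel has vanishing $H^{n+1}$ purely for dimension reasons; therefore $H^n(\mathcal F \otimes L^m)$ is a quotient of $H^n(L^m\otimes \mathcal O_X(-A))^{\oplus N}$, and $(n-1)$-ampleness of $L$ reduces to the condition that for every ample $A$, $H^n(X, L^{\otimes m}\otimes \mathcal O_X(-A))=0$ for $m\gg 0$. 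Serre duality on the smooth projective $X$ rewrites this as
\[ H^0(X, \omega_X+A-mL)=0 \quad\text{for } m\gg 0, \text{ for every ample } A,\]
i.e.\ $\omega_X+A-mL$ is non-effective once $m$ is sufficiently large.

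For the direction $-L \notin \overline{\psef}(X) \Rightarrow L\in (n-1)\amp(X)$, I would exploit closedness of the pseudo-effective cone: there exists an open neighbourhood $U$ of $-L$ in $N^1(X)$ disjoint from $\overline{\psef}(X)$. For any fixed ample $A$, the classes $-L+(\omega_X+A)/m$ converge to $-L$ and hence lie in $U$ for $m \gg 0$; they are therefore not pseudo-effective, in particular not $\QQ$-effective, and scaling by $m$ shows $\omega_X+A-mL$ has no sections.

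For the converse I would begin with the interior case $-L\in \inte\bigl(\overline{\psef}(X)\bigr)$, where $-L$ is big and consequently $H^0(X,-mL)\ne 0$ for $m\gg 0$. Choosing an ample $A$ large enough that $\omega_X+A$ is effective and tensoring sections yields $H^0(\omega_X+A-mL)\ne 0$ for $m\gg 0$, and Serre duality gives $H^n(X,\mathcal O_X(A)\otimes L^{\otimes m})\ne 0$ for $m\gg 0$, which contradicts $(n-1)$-ampleness. To pass from the open interior to the full closed cone $-\overline{\psef}(X)$ I would invoke openness of $(n-1)\amp(X)$ from \cite[Theorem 8.3]{Tot}: its complement in $N^1(X)$ is then closed, already contains the open set $-\inte\bigl(\overline{\psef}(X)\bigr)$, and therefore contains its closure $-\overline{\psef}(X)$, using that a closed convex cone with nonempty interior equals the closure of its interior. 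The boundary of $\overline{\psef}(X)$ is where I expect the main obstacle: when $-L$ is pseudo-effective but not big, directly producing obstructing sections of $\omega_X+A-mL$ from properties of $-L$ is delicate, and the cleanest resolution is precisely this bootstrap from the interior case via the previously established openness of the $q$-ample cone.
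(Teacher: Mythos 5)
The paper itself does not prove this statement: Theorem \ref{n-1} is quoted verbatim from Totaro \cite[Theorem 9.1]{Tot}, so your proposal can only be compared with that argument, and in fact it follows essentially the same route as the standard proof: reduce $(n-1)$-ampleness to the vanishing of $H^n(X,L^{\otimes m}\otimes\mathcal{O}_X(-A))$ for ample $A$ via a surjection $\mathcal{O}_X(-A)^{\oplus N}\twoheadrightarrow\mathcal{F}$ and the vanishing of $H^{n+1}$, dualize into an asymptotic effectivity question, and compare with the pseudo-effectivity of $-L$. For smooth $X$ your argument is sound: the limiting argument showing $\omega_X+A-mL$ has no sections when $-L\notin\overline{\psef}(X)$, the use of bigness of $-L$ in the interior case, and the bootstrap from $-\inte\bigl(\overline{\psef}(X)\bigr)$ to $-\overline{\psef}(X)$ via openness of $(n-1)\amp(X)$ (\cite[Theorem 8.3]{Tot}, which Totaro establishes before Theorem 9.1, so there is no circularity) are all correct. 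Two small points: the dual of $H^0(\omega_X+A-mL)$ is $H^n(X,L^{\otimes m}\otimes\mathcal{O}_X(-A))$, not $H^n(X,L^{\otimes m}\otimes\mathcal{O}_X(A))$; and both directions are written for line bundles, so you should add the (routine) passage to $\RR$-classes via the definition $D=cL+A$, e.g.\ if $-D$ is big and $D=cL+A$ then $-cL=-D+A$ is big, and conversely a class with $-D\notin\overline{\psef}(X)$ can be written as (rational, $(n-1)$-ample) plus ample by openness and density of rational classes.

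The one substantive gap is the smoothness hypothesis. The theorem is stated for an arbitrary projective variety, and this generality is genuinely used later in the paper: Theorem \ref{n-1} is applied to $F$, an irreducible component of a fibre of an extremal contraction, which need not be smooth or even Gorenstein, so Serre duality with a canonical line bundle $\omega_X$ is not available there. The repair is standard but has to be made explicit: replace $\omega_X$ by the dualizing sheaf $\omega_X^{\circ}$ of the projective scheme $X$, for which $H^n(X,\mathcal{F})^{\vee}\cong\operatorname{Hom}(\mathcal{F},\omega_X^{\circ})$ holds in top degree. For the direction ``$-L$ pseudo-effective (in fact big) implies $L\notin(n-1)\amp(X)$'' --- the only direction the paper actually invokes --- take $\mathcal{F}=\omega_X^{\circ}$: a nonzero section of $-mL$ gives by multiplication a nonzero element of $\operatorname{Hom}\bigl(\omega_X^{\circ},\omega_X^{\circ}\otimes\mathcal{O}_X(-mL)\bigr)$, hence $H^n(X,\omega_X^{\circ}\otimes L^{\otimes m})\neq0$ for infinitely many $m$. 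For the converse direction one needs $H^0\bigl(\omega_X^{\circ}\otimes\mathcal{O}_X(A-mL)\bigr)=0$ for $m\gg0$; since $\omega_X^{\circ}$ is torsion-free on the integral $X$, embed $\omega_X^{\circ}(A)$ into $\mathcal{O}_X(B)^{\oplus r}$ for a suitable very ample $B$ and run your numerical limit argument on the classes $\frac{1}{m}B-L$. With these adjustments your proof covers the statement in the generality in which the paper uses it.
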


\begin{definition} Let $X$ be a projective variety.
\item{(\romannumeral 1)}
An $\RR$--divisor $L$ on $X$ is called B $q$-ample if the augmented base locus $B_+(L)$ has dimension $\le q$.
We will denote
  \[ Bq\amp(X)\subset N^1(X)\]
 the cone generated by B $q$-ample divisors. 

\item{(\romannumeral 2)} Let $H_1,\ldots,H_q$ be very ample divisors on $X$. An $\RR$--divisor $L$ on $X$ is called 
$(H_1,\ldots,H_q)$-ample if the restriction 
  \[L\vert_{h_1\cap\cdots\cap h_q}\]
   is ample, for $h_i\in\vert H_i\vert$ generic. An $\RR$--divisor is said to be
H $q$-ample if it is $(H_1,\ldots,H_q)$-ample, for certain very ample $H_1, \ldots, H_q$. We will denote
  \[ Hq\amp(X)=\bigcup_{(H_1,\ldots,H_q)\hbox{very ample}} (H_1,\ldots,H_q)\amp(X)\subset N^1(X)\]
  the cone generated by H $q$-ample divisors.
\end{definition}

\begin{remark} The augmented base locus $B_+(L)\subset X$ is the locus where $L$ fails to be ample; for the definition and properties, cf. \cite{ELM} and \cite{ELM2}.
\end{remark}

\begin{remark}\label{mobile} It is easily seen that
  \[ B0\amp(X)=H0\amp(X)=\amp(X),\]
  while $B(n-1)\amp(X)=\hbox{Big}(X)$. The cones $Bq\amp(X)$ are open \cite[Theorem 4.5]{Choi},
     and $B(n-2)\amp(X)$ coincides with the interior of the cone of mobile divisors:
   \[  B(n-2)\amp(X)=  \mob(X)  \setminus \partial\mob(X) \] 
         (\cite[Lemma 3.1]{Choi2}). 
         \end{remark}

\begin{remark}\label{Cho} The cones $Bq\amp$ (or rather, their closure) have been studied by Payne \cite{Pay} and Choi \cite{Choi}. It is established by Choi \cite[Theorem 4.5]{Choi} that the closure of $Bq\amp(X)$ can be described in terms of the diminished base locus:
  \[    \overline{Bq\amp(X)}=\{ L\in N^1X \vert \dim B_-(L)\le q\}\ . \]  
\end{remark}

\begin{proposition}\label{inclusions}(K\"uronya \cite{Kur}) Let $X$ be a smooth projective variety. For any $0\le q\le n-1$, there are inclusions of cones
  \[ Bq\amp(X)\subset Hq\amp(X)\subset q\amp(X).\]
\end{proposition}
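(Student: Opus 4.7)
The plan is to prove the two inclusions separately, since they are of rather different natures: the second is cohomological, the first geometric.

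For $Hq\amp(X)\subset q\amp(X)$, I would induct on $q$. The base case $q=0$ is tautological since $H0\amp=\amp=0\amp$. For the inductive step, if $L$ is $(H_1,\ldots,H_q)$-ample then $L|_{h_1}$ is $(H_2|_{h_1},\ldots,H_q|_{h_1})$-ample on the very ample smooth hypersurface $h_1$, so by induction $L|_{h_1}$ is $(q-1)$-ample on $h_1$. Given any coherent sheaf $\mathcal F$ on $X$, the restriction sequence
\[ 0\to \mathcal F(-h_1)\otimes L^m \to \mathcal F\otimes L^m \to \mathcal F|_{h_1}\otimes L^m|_{h_1}\to 0 \]
yields, in cohomology, that for $i>q$ and $m\gg 0$ the term on $h_1$ vanishes by the inductive hypothesis. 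Iterating this with $\mathcal F$ replaced by successive twists $\mathcal F(-kh_1)$ and applying Serre vanishing for $k\gg 0$ (after reducing to locally free $\mathcal F$ by a resolution, which exists since $X$ is smooth) gives $H^i(X,\mathcal F\otimes L^m)=0$, i.e.\ $L$ is $q$-ample.

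For $Bq\amp(X)\subset Hq\amp(X)$, let $L$ satisfy $\dim B_+(L)\le q$. I would pick very ample $H_1,\ldots,H_q$ and take $Z=h_1\cap\cdots\cap h_q$ a generic complete intersection; by Bertini, $Z$ is smooth of dimension $n-q$, and by dimension counting, $B_+(L)\cap Z$ is at most a finite set of points. A direct comparison of restrictions of sections of $m(L-\epsilon A)$ for $A$ ample gives the inclusion $B_+(L|_Z)\subseteq B_+(L)\cap Z$, so $B_+(L|_Z)$ is finite on $Z$. The key lemma --- and the main obstacle --- is that any divisor $M$ on a smooth projective variety of positive dimension with finite $B_+(M)$ is in fact ample. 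I would argue by Nakai--Moishezon: $M$ is big since $B_+(M)\neq Z$; $M$ is nef because every curve $C\subset Z$ avoids the finite locus $B_+(M)$, so $M|_C$ is big on the curve $C$ and hence has positive degree; and for every positive-dimensional $V\subseteq Z$, $M|_V$ is nef and big, whence $(M|_V)^{\dim V}=\operatorname{vol}(M|_V)>0$. The decisive input here is the characterization of $B_+(L)$ as the union of positive-dimensional subvarieties on which $L$ fails to be big (Nakamaye's theorem in the nef-and-big case, and its extensions); without this, the passage from ``finite $B_+$'' to ``big restriction on every positive-dimensional subvariety'' would not be available.
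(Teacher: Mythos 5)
Your argument for the first inclusion $Bq\amp(X)\subset Hq\amp(X)$ is essentially the paper's: restrict to a generic complete intersection $Z=h_1\cap\cdots\cap h_q$, use $B_+(L\vert_Z)\subset B_+(L)\cap Z$, and conclude from the fact that a big divisor whose augmented base locus has dimension $\le 0$ is ample. The paper leaves this last step implicit, and your Nakai--Moishezon argument is a correct way to supply it, provided you fix the wording: a curve $C\subset Z$ certainly can meet a finite set of points, so it does not ``avoid'' $B_+(L\vert_Z)$; what you actually use, and what is true by the restricted-volume characterization of $B_+$ in \cite{ELM}, is that no positive-dimensional subvariety is \emph{contained} in the finite set $B_+(L\vert_Z)$, whence $\deg(L\vert_C)>0$ for every curve and $\operatorname{vol}(L\vert_V)>0$ for every positive-dimensional $V\subset Z$. (Equivalently, one may quote directly that the augmented base locus of a big divisor has no isolated points.)

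The proposed proof of the second inclusion $Hq\amp(X)\subset q\amp(X)$ has a genuine gap; note that the paper does not reprove this inclusion either, but quotes it as K\"uronya's vanishing theorem \cite[Theorem 1.1]{Kur} (also \cite[Theorem 3.4]{DPS}). Two concrete problems. First, Serre vanishing concerns positive twists by an ample divisor and says nothing about $H^i(X,\mathcal F(-kh_1)\otimes L^{\otimes m})$; if you convert by Serre duality (for locally free $\mathcal F$) the argument fails outright in degree $i=n$, and in degrees $i<n$ the threshold for $k$ depends on $m$ because the dualized sheaf contains $L^{\otimes -m}$. Second, and more fundamentally, the quantifiers are circular: your chain of surjections $H^i(X,\mathcal F(-kh_1)\otimes L^{\otimes m})\to\cdots\to H^i(X,\mathcal F\otimes L^{\otimes m})$ is only valid once $m\ge \max_{0\le j<k} m_0\bigl(\mathcal F(-jh_1)\vert_{h_1}\bigr)$, where each $m_0$ comes from applying the $(q-1)$-ampleness of $L\vert_{h_1}$ to that particular twist, while killing the leftmost term requires $k\ge k_0(m)$. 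No bound uniform in $j$ is available for the increasingly anti-ample twists $\mathcal O(-jh_1)\vert_{h_1}$ (for fixed $m$ the top-degree cohomology on $h_1$ grows with $j$), so the induction never closes. The actual proofs twist in the opposite, positive direction, so that the auxiliary twists appearing on $h_1$ are nef, and break the circularity by carrying a uniform vanishing statement through the induction --- Fujita's vanishing theorem, or equivalently the Castelnuovo--Mumford regularity / ``uniform $q$-ampleness'' formulation used in \cite{DPS} and \cite{Tot}. That uniformity is the real content of the inclusion; without it your sketch establishes nothing beyond the case $q=0$.
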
  

\begin{proof} For the first inclusion, it is easily seen that actually
  \[ Bq\amp(X)\subset \bigcap_{(H_1,\ldots,H_q)\hbox{very ample}} (H_1,\ldots,H_q)\amp(X);\]
 indeed, suppose $L$ is such that $\dim B_+(L)\le q$. For any $H_1,\ldots,H_q$ very ample and $h_i\in\vert H_i\vert$ generic,
 $B_+(L)\cap h_1\cap\cdots h_q$ has dimension $\le 0$. But 
  \[ B_+(L\vert_{   h_1\cap\cdots\cap h_q})\subset B_+(L)\cap h_1\cap\cdots h_q\]
\cite[]{Kur}, so $L  \vert_{   h_1\cap\cdots\cap h_q}$ is ample. The second inclusion is a vanishing theorem proven by K\"uronya \cite[Theorem 1.1]{Kur}; this was also proven by Demailly--Peternell--Schneider \cite[Theorem 3.4]{DPS}.   
\end{proof}

\begin{remark} Both inclusions in Proposition \ref{inclusions} may be strict. For the second inclusion, K\"uronya provides an example \cite[Example 1.13]{Kur} where
  \[ H(n-1)\amp(X)  \not= (n-1)\amp(X). \]
 For the first inclusion, let $X$ be a surface. Then any line bundle $L$ which is not big and such that $-L$ is not pseudo-effective is in
 \[ H1\amp(X)\setminus B1\amp(X).\]
 A more subtle example is \cite[Example 1.7]{Kur}, which exhibits a big line bundle $L$ on a threefold $X$, satisfying
 \[ L\in H1\amp(X)\setminus B1\amp(X).\]
 \end{remark}

 \section{MMP}
 
 In this section, we collect some results about minimal model theory and contractions.
 
 \begin{definition}\label{stable}{(\cite{ELM2})} A divisor $L$ is called {\sl stable} if $B_-(L)$ and $B_+(L)$ coincide.
\end{definition}

\begin{proposition}\label{dense}{(\cite[Proposition 1.29]{ELM2})} The stable divisors form an open and dense subset in $N^1X$.
\end{proposition}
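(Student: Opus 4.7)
Since $B_-(D) \subseteq B_+(D)$ is automatic (both are described in terms of stable base loci of perturbations $D \mp \epsilon A$ with $A$ ample, and stable base loci are antitone when moving in the ample direction), stability amounts to the reverse inclusion $B_+(D) \subseteq B_-(D)$. I would split the proof into openness and density.

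\textit{Openness.} I would invoke two semicontinuity properties, both derived from the perturbation formulas
\[ B_+(D) \;=\; B(D - \epsilon A), \qquad B_-(D) \;=\; \bigcup_{\epsilon > 0} B(D + \epsilon A), \]
valid for any ample class $A$ and all sufficiently small $\epsilon > 0$. The formulas yield: (a) $B_+$ is upper semicontinuous, i.e.\ $B_+(D') \subseteq B_+(D)$ for $D'$ close enough to $D$; (b) $B_-$ is lower semicontinuous, i.e.\ $B_-(D') \supseteq B_-(D)$ for such $D'$. Granted (a) and (b), at a stable $D$ and nearby $D'$ one has
\[ B_+(D') \;\subseteq\; B_+(D) \;=\; B_-(D) \;\subseteq\; B_-(D') \;\subseteq\; B_+(D'), \]
forcing equality throughout, so $D'$ is stable and the stable locus is open.

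\textit{Density.} Given a class $D$ and a neighborhood $U$, I pick any ample class $A$ and $\delta > 0$ small enough that $\{D + tA : 0 < t < \delta\} \subset U$. The stable base locus $B(D + tA)$ is non-increasing in $t$, and since $X$ is Noetherian, its image is a finite chain of closed subvarieties; the function is therefore locally constant outside finitely many ``jump'' values of $t$. At any $t_0$ lying in the interior of a constancy interval with constant value $Z$, both perturbation formulas collapse to $B_+(D + t_0 A) = Z = B_-(D + t_0 A)$, so $D + t_0 A$ is a stable class inside $U$.

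The main obstacle is justifying the semicontinuity statements (a) and (b), together with the claim that stable base loci of $\RR$-divisors are antitone under addition of ample classes. Both points require careful manipulation of the definitions of $B$, $B_+$, $B_-$ for genuine $\RR$-divisors rather than $\QQ$-divisors, and are the heart of what is worked out in \cite{ELM2}.
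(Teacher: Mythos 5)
The paper offers no argument for this statement (it is quoted verbatim from \cite{ELM2}), so your proposal is measured against the standard proof there; your overall architecture (openness from the behaviour of $B_+$ and $B_-$ near a stable class, density by moving along an ample ray and using monotonicity plus Noetherianity) is the right one. But your ingredient (b) is a genuine gap: the asserted lower semicontinuity of $B_-$ does not follow from the perturbation formula and is false in general. The formula writes $B_-(D)$ as the increasing union of the closed sets $B(D+\delta A)$ as $\delta$ decreases to $0$, and this union need not stabilize (examples of Lesieutre show $B_-$ need not even be closed); when it does not stabilize, $B_-(D+\epsilon A)=\bigcup_{\delta>\epsilon}B(D+\delta A)$ misses part of $B_-(D)$ for every $\epsilon>0$, so the inclusion $B_-(D')\supseteq B_-(D)$ already fails along the ample ray. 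What rescues your openness argument is that you only need (b) at a stable class $D$: there $B_-(D)=B_+(D)$ is closed, each of its finitely many irreducible components is not a countable union of proper closed subsets (we work over $\CC$), hence is contained in a single $B(D+\epsilon_0A)$, so $B(D+\epsilon_0A)=B_-(D)$ for some $\epsilon_0>0$; then for any $D'$ with $(D-D')+\tfrac{\epsilon_0}{2}A$ ample one gets $B_-(D')\supseteq B\bigl(D'+\tfrac{\epsilon_0}{2}A\bigr)\supseteq B(D+\epsilon_0A)=B_-(D)$. This stabilization step is exactly what is missing from your sketch and must be supplied.

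In the density step, the claim that ``the image is a finite chain of closed subvarieties'' is not a consequence of Noetherianity: a Noetherian space satisfies the descending chain condition on closed subsets, whereas the family $B(D+tA)$ increases as $t$ decreases and may a priori take infinitely many values (the same non-stabilization phenomenon as above). What you need, and what the DCC does give, is weaker and sufficient: if $t\mapsto B(D+tA)$ were constant on no subinterval of $(0,\delta)$, you could extract an infinite strictly decreasing chain of closed subsets, a contradiction; hence there is a subinterval of constancy, and at an interior point $t_0$ of it your collapse $B_+(D+t_0A)=Z=B_-(D+t_0A)$ goes through. Your closing caveat about $\RR$- versus $\QQ$-classes is well placed; one way to sidestep defining $B$ of an irrational class is to run the same monotonicity argument with $B_+(D+tA)$ and use $B_+(E+\epsilon A)\subseteq B_-(E)$, so that local constancy of $t\mapsto B_+(D+tA)$ at $t_0$ directly yields stability of $D+t_0A$.
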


\begin{lemma}\label{indet} Let $X$ be a smooth projective variety, and $L$ on $X$ an $\RR$--divisor which is big and stable.
 Let 
 \[f\colon X - - \to X_{min}\]
  be an $L$--MMP, i.e. $f_\ast L$ is nef. Let $\exc(f)\subset X$ denote the complement of the maximal open subset over which $f$ is an isomorphism. Then
   \[  B_+(L)\supset\exc(f)\ .\]
 \end{lemma}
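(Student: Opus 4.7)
My plan is induction on the number $N$ of steps of the $L$--MMP $f\colon X\dashrightarrow X_{min}$; the base case $N=0$ is trivial. For the inductive step, write $f=h\circ g$ where $g\colon X\dashrightarrow X_1$ is the first MMP step (either a divisorial contraction or a flip of an extremal ray $R$ with $L\cdot R<0$) and $h\colon X_1\dashrightarrow X_{min}$ is the remainder of the MMP, applied to $L_1:=g_\ast L$.

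First I would show that the exceptional set $Z_g$ of $g$---the contracted divisor $E$ in the divisorial case, or the flipping locus in the flipping case---lies in $B_+(L)$. In both cases $Z_g$ is swept out by a family of curves $C$ in the ray $R$ with $L\cdot C<0$. For any ample $\RR$-divisor $A$ on $X$ and any sufficiently small $\varepsilon>0$ we still have $(L-\varepsilon A)\cdot C<0$, so every section of a multiple of $L-\varepsilon A$ vanishes identically on $C$, giving $C\subset B(L-\varepsilon A)$. Letting $C$ sweep the family and $\varepsilon\to 0^+$ yields $Z_g\subset B_+(L)$. Next I would establish the pullback identity $B_+(L)=Z_g\cup g^{-1}(B_+(L_1))$. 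In the divisorial case $L=g^\ast L_1+aE$ with $a>0$ (since $L\cdot C$ and $E\cdot C$ are both negative for $C\subset E$ in $R$), and the projection formula $g_\ast\mathcal{O}_X(mL)\cong \mathcal{O}_{X_1}(mL_1)$ gives $H^0(X,mL)\cong H^0(X_1,mL_1)$, with all resulting sections automatically vanishing along $E$. Choosing an ample divisor on $X$ of the form $A=g^\ast A_1-\delta E$ (ample for small $\delta>0$ by Kleiman applied to the contracted ray) and running the same computation for $L-\varepsilon A$ gives $B(L-\varepsilon A)=E\cup g^{-1}(B(L_1-\varepsilon A_1))$, and the identity follows by intersecting as $\varepsilon\to 0^+$. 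In the flipping case $g$ is an isomorphism outside codimension $\geq 2$ subsets, so Hartogs extension supplies $H^0(X,mL)\cong H^0(X_1,mL_1)$ and a parallel identity is obtained in the same way.

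Combining these two steps with the inductive hypothesis $\exc(h)\subset B_+(L_1)$---which applies once one verifies that bigness and stability of $L_1$ are inherited from those of $L$ via the pullback identities above---one concludes
\[\exc(f)=Z_g\cup g^{-1}(\exc(h))\subset Z_g\cup g^{-1}(B_+(L_1))=B_+(L).\]
The main technical obstacle is making the pullback identity for $B_+$ at a single MMP step precise: in particular, choosing the ample divisor on $X$ compatibly with one on $X_1$ (the trick $A=g^\ast A_1-\delta E$ in the divisorial case), and checking that the stability hypothesis is preserved under a single MMP step so that the induction can iterate.
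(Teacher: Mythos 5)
Your overall strategy (work through the MMP step by step, observe that the locus contracted at each step is covered by curves on which the pushed-forward divisor is negative, hence lies in the augmented base locus there, and then transport this information back to $X$) is the same as the paper's; the difference is that the paper does the transfer in one shot per irreducible component of $\exc(f)$, by applying Boucksom--Broustet--Pacienza \cite[Proposition 1.5]{BBP} to a resolution of indeterminacy of $f_i$, whereas you try to reprove the one-step transfer by hand. Your divisorial case is essentially a hands-on proof of that special case of \cite{BBP}, and the trick $A=g^{\ast}A_1-\delta E$ works there. The genuine gap is the flipping case, which you dispose of with ``a parallel identity is obtained in the same way'': it is not the same way. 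For a flip there is no morphism between $X$ and $X_1$, there is no exceptional divisor to subtract, and the strict transform of an ample divisor is not ample (for a flip the relative Picard number over the base is one, so the strict transform of a relatively ample divisor is relatively \emph{anti}-ample, negative on the flipped curves). Hartogs does give $H^0(X,mL)\cong H^0(X_1,mL_1)$ and, more generally, identifies sections of a divisor with sections of its strict transform; but this only lets you compare $B(L-\varepsilon A)$ with $B(L_1-\varepsilon A')$ where $A'$ is the non-ample strict transform of $A$, and such loci do not compute $B_+(L_1)$. So the inclusion you actually need, $g^{-1}\bigl(B_+(L_1)\bigr)\subset B_+(L)\cup Z_g$, is unproved in the flip case. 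The standard repair is exactly the paper's: pass to a common resolution $p\colon W\to X$, $q\colon W\to X_1$, write $p^{\ast}L=q^{\ast}L_1+F$ with $F\ge 0$ exceptional (negativity lemma), and invoke the formula $B_+(q^{\ast}L_1+F)=q^{-1}\bigl(B_+(L_1)\bigr)\cup\exc(q)$ together with $B_+(p^{\ast}L)=p^{-1}\bigl(B_+(L)\bigr)\cup\exc(p)$; that formula is precisely \cite[Proposition 1.5]{BBP}, and some substitute for it has to enter your argument.

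A secondary problem is the way your induction is set up: the inductive hypothesis is applied to $L_1=g_{\ast}L$, which requires $L_1$ to be big \emph{and stable}, and you only flag the preservation of stability as something ``to check''. It is not clear that stability survives an MMP step, and nothing in your one-step argument (nor in the paper's proof) ever uses stability; the hypothesis is carried along in the lemma only because that is how it is applied later. The clean fix is to prove the statement for an arbitrary big divisor $L$ and drop stability from the induction entirely; as written, your induction does not close.
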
  
 
 \begin{proof}
Let $E\subset \exc(f)$ be an irreducible component. Then, there is some index $0<i<r$, such that $-(f_i)_\ast L$ is $\psi_i$--ample on the strict transform $E_i$ of $E$ in $X_i$. This implies
   \[  E_i\subset  B_+\bigl((f_i)_\ast L\bigr)\]
 (indeed, $E_i$ is covered by curves on which $(f_i)_\ast L$ is negative, and such curves lie in the stable base locus of $(f_i)_\ast L$).  But then, applying the following proposition to a resolution of indeterminacy of $f_i$, we see that $E$ must lie in $B_+(L)$.
 
\begin{proposition}\label{bbp}{(Boucksom--Broustet--Pacienza \cite[Proposition 1.5]{BBP})} Let $\pi\colon \wt{X}\to X$ be a birational morphism between normal projective varieties. Let $F$ be an effective $\pi$--exceptional divisor. Then for any big $\RR$--divisor $L$ on $X$, we have
  \[  B_+(\pi^\ast L+F)=\pi^{-1}\bigl( B_+(L)\bigr)\cup \exc(\pi)\ .\]
\end{proposition}  

\end{proof}

\begin{remark} With some more work, one can in fact prove that equality holds in Lemma \ref{indet}; we don't need this in this paper.
\end{remark}

\begin{theorem}\label{dim1}{(Wi\'sniewski \cite{Wis})} Let $X$ be a smooth projective variety, and let
  \[ \psi\colon X\to Z\]
be the contraction of a $K_X$--negative extremal ray. Suppose all fibres of $\psi$ are of dimension $\le 1$. Then $Z$ is smooth, and $\psi$ is either the blow--up of $Z$ along a smooth codimension $2$ subvariety, or a conic bundle over $Z$.
\end{theorem}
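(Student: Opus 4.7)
The plan is to follow the classical Mori--Ando--Wi\'sniewski approach, combining the cone/contraction theorem with deformation theory of rational curves on $X$. The cone theorem presents $\psi$ as a morphism to a normal projective $Z$ with connected fibres, contracting precisely the curves in the given extremal ray $R$. By Mori's bend-and-break, every positive-dimensional fibre contains a rational curve of class in $R$, and since all fibres have dimension at most $1$, a general fibre is a smooth rational curve $F\cong\mathbb{P}^1$. Two cases arise: either $\dim Z=n-1$ (fibre type, candidate conic bundle), or $\dim Z=n$ and $\psi$ contracts an exceptional divisor $E$ onto a subvariety $B\subset Z$; the assumption on fibre dimension forces $B$ to have codimension exactly $2$.

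The core of the argument is a local analysis using the normal bundle $N_{F/X}=\bigoplus_i\mathcal{O}_{\mathbb{P}^1}(a_i)$. Adjunction gives $-K_X\cdot F=2+\sum a_i$, and the fact that deformations of $F$ sweep out either all of $X$ (fibre type) or precisely $E$ (divisorial) forces, together with the length bound for a $K_X$-negative extremal ray, $N_{F/X}\cong\mathcal{O}^{\oplus n-1}$ with $-K_X\cdot F=2$ in the fibre-type case, and $N_{F/X}\cong\mathcal{O}(-1)\oplus\mathcal{O}^{\oplus n-2}$ with $-K_X\cdot F=1$ in the divisorial case. In both cases $H^1(F,N_{F/X})=0$, so the Hilbert scheme of such curves is smooth at $[F]$, giving a well-behaved universal family whose structure morphism will be used to reconstruct $\psi$.

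To conclude, apply Kawamata--Viehweg vanishing to $-K_X$ twisted by a sufficiently positive pullback from $Z$ to obtain $R^i\psi_\ast\mathcal{O}_X=0$ for $i>0$; hence $Z$ has rational, in particular Cohen--Macaulay, singularities, and equidimensionality with $1$-dimensional fibres plus miracle flatness makes $\psi$ flat. In the divisorial case, this identifies $E\to B$ as a $\mathbb{P}^1$-bundle, and a Nakano--Fujiki-style contractibility argument (using the normal bundle computation above) shows $Z$ is smooth and $\psi$ is the blow-up of $Z$ along the smooth codimension $2$ subvariety $B$. In the fibre-type case, $\psi$ becomes a flat family of plane conics embedded by a relatively very ample twist of $-K_X$; smoothness of $X$ forces smoothness of $Z$ via the classification of flat degenerations ($\mathbb{P}^1$, pair of lines, or double line).

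The hard part will be the conic-bundle case at points of $Z$ over which the fibre degenerates. One must check that a reducible or non-reduced fibre is compatible with smoothness of $X$ only if $Z$ is smooth at its image; this requires applying the length estimate for $R$ to each component of a singular fibre (forcing each component to be a $(-1)$-curve style rational curve with controlled normal bundle) and a careful Riemann--Roch computation on the possibly non-reduced total fibre to compare $T_X$ with the relative tangent sheaf. Once this local rigidity is established, the smoothness of $Z$ and the conic-bundle structure follow formally from the flatness obtained above.
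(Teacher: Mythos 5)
The paper does not prove this statement at all: it is quoted as a known theorem and the ``proof'' is the citation \cite[Theorem 1.2]{Wis} (cf. \cite[Theorem 4.1]{AW}), going back to Ando's and Wi\'sniewski's work. So what you have written is an attempt to reprove a genuinely deep classification result, and as it stands it is an outline rather than a proof: every step that is actually hard in Ando--Wi\'sniewski is either deferred (``a Nakano--Fujiki-style contractibility argument'', ``a careful Riemann--Roch computation'', ``the hard part will be the conic-bundle case'') or asserted without justification. The smoothness of $Z$, the smoothness of the centre $B$, the fact that $E\to B$ is a $\mathbb{P}^1$-bundle on \emph{every} fibre (not just the general one) with $\mathcal{O}_E(E)$ of degree $-1$ on each fibre, and the local rigidity at reducible or non-reduced conics are precisely the content of the theorem; your normal-bundle computation only controls the general fibre, where everything is easy.

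There is also a concrete incorrect step. You deduce flatness of $\psi$ from ``$Z$ Cohen--Macaulay $+$ equidimensional fibres $+$ miracle flatness'', but miracle flatness requires the \emph{base} to be regular (the source being Cohen--Macaulay); regularity of $Z$ is the conclusion you are after, so the argument is circular, and Cohen--Macaulayness of the base is genuinely insufficient (e.g.\ $\mathbb{A}^2\to\mathbb{A}^2/\{\pm1\}$ is equidimensional with regular source and Cohen--Macaulay normal target, yet not flat). Moreover, ``$R^i\psi_\ast\mathcal{O}_X=0$ hence $Z$ has rational singularities'' is not a formal implication for a fibre-type morphism (rational singularities are defined via resolutions, i.e.\ birational morphisms; the corresponding statement for fibrations is a nontrivial theorem that itself needs proof or citation), and in the divisorial case $\psi$ is birational and not flat at all, so the flatness you would need is that of $\psi\vert_E\colon E\to B$, which again presupposes knowledge of $E$ and $B$ that you have not established. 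If your goal is to use this statement in the present paper, the honest course is to do what the paper does and cite \cite{Wis} or \cite{AW}; if your goal is a self-contained proof, the missing local analysis at degenerate fibres and a correct route to flatness/smoothness of $Z$ (e.g.\ via $\psi_\ast\mathcal{O}_X(-K_X)$ and cohomology-and-base-change as in Ando) must actually be carried out.
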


\begin{proof} \cite[Theorem 1.2]{Wis} (cf. also \cite[Theorem 4.1]{AW}.
\end{proof}

 \begin{theorem}\label{wis}{(Wi\'sniewski \cite{Wis}, Ionescu \cite{Ion})} Let $X$ be a smooth projective variety of dimension $n$, and let $R$ be a $K_X$--negative extremal ray of length 
   \[ \ell(R):=\min\bigl\{ -K_X\cdot C \vert C\hbox{\ rational\ curve}, C\in R\bigr\}\ .\]
Let $\psi$ be the contraction of $R$, and let $E$ be an irreducible component of the locus of $R$. Let $F$ be an irreducible component of a fiber of the restriction of $\psi$ to $E$. Then
  \[ \dim E+\dim F \ge n+\ell(R)-1\ .\]
\end{theorem}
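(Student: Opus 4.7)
The plan is to prove the Wi\'sniewski--Ionescu inequality via deformation theory of a minimal covering family of rational curves in the extremal ray $R$, together with Mori's bend-and-break lemma. By Mori's cone theorem there is an irreducible component $H$ of $\operatorname{RatCurves}(X)$ parameterizing rational curves $C$ with $[C]\in R$ and $-K_X\cdot C=\ell(R)$. Write $V:=\operatorname{Locus}(H)\subset X$; since every curve parameterized by $H$ lies in the locus of $R$, one has $V\subset E$.

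The key deformation-theoretic bounds I will use are
\[\dim H\ge n+\ell(R)-3\qquad\text{and}\qquad \dim H_y\ge \ell(R)-2\ \text{for every }y\in V,\]
where $H_y\subset H$ denotes the subfamily of curves through $y$. These come from the standard estimates $\dim_{[f]}\operatorname{Hom}(\mathbb{P}^1,X)\ge\chi(f^*T_X)=n+\ell(R)$ and $\dim_{[f]}\operatorname{Hom}(\mathbb{P}^1,X;\,0\mapsto y)\ge \ell(R)$ at any parameterization $f\colon\mathbb{P}^1\to X$ of a curve of $H$ passing through $y$, upon quotienting by the action of $\operatorname{Aut}(\mathbb{P}^1)$ (respectively its $2$-dimensional stabilizer $\operatorname{Aut}(\mathbb{P}^1,0)$).

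Next, consider the incidence variety $I:=\{([C'],z)\in H\times X : z\in C'\}$, which is a $\mathbb{P}^1$-bundle over $H$, so $\dim I=\dim H+1$. Projecting $I$ to $X$ gives $V$, and a dimension count using a general fiber yields
\[\dim V+\dim H_y=\dim H+1\ge n+\ell(R)-2\qquad(y\in V\text{ general}).\]
Mori's bend-and-break lemma, applied to the \emph{minimal} family $H$, ensures that only finitely many curves of $H$ pass through two general points of $V$; consequently the sublocus $V_y:=\operatorname{Locus}(H_y)$ satisfies $\dim V_y=\dim H_y+1$. Since $V_y$ is irreducible, contains $y$, and is contained in $E\cap\psi^{-1}(\psi(y))$, for $y\in F$ sufficiently general it is contained in the single fiber component $F$. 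Choosing such a $y$ (which requires that $F$ be dominated by curves of $H$, see below), one obtains
\[\dim E+\dim F\ \ge\ \dim V+\dim V_y\ =\ \dim V+\dim H_y+1\ \ge\ n+\ell(R)-1,\]
which is the Wi\'sniewski--Ionescu inequality.

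The main technical obstacle is the claim that $F$ is covered by curves of the minimal family $H$, so that a general point of $F$ can simultaneously be taken general in $V$. This follows from the rational chain-connectedness of fibers of $K_X$-negative extremal contractions (due to Kawamata--Koll\'ar--Miyaoka), combined with bend-and-break, which breaks any rational curve in $R$ through a point of $F$ into components until one reaches a member of the minimal family $H$.
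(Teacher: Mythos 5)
Note first that the paper does not prove this statement at all: it is quoted with a citation to \cite[Theorem 1.1]{Wis} and \cite[Theorem 0.4]{Ion}, so your attempt has to be measured against the classical argument in those sources. Your numerical skeleton is indeed the standard one: for an unsplit irreducible family $H$ one has $\dim \mathrm{Locus}(H)+\dim \mathrm{Locus}(H_y)\ge \dim H+2\ge n+\ell(R)-1$, via the incidence variety and finiteness of curves through two points. The genuine gap is precisely the step you flag as the ``main technical obstacle'', and your proposed fix does not close it. You fix once and for all a component $H$ of $\operatorname{RatCurves}(X)$ whose members have degree exactly $\ell(R)$, and you need both $V=\mathrm{Locus}(H)\subset E$ and that a \emph{general} point of the \emph{given} fiber component $F$ lies on curves of $H$ (indeed is a general point of $V$). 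Neither holds in general: $\mathrm{Locus}(R)$ may have several irreducible components, the irreducible set $V$ lies in \emph{some} component but not necessarily in the prescribed $E$, and even inside one component the curves of degree exactly $\ell(R)$ may sweep out only a proper subvariety, so they need not pass through a general point of $F$ (which may moreover be a component of a special fiber, while your count $\dim V+\dim H_y=\dim H+1$ requires $y$ general in $V$, hence lying in a general fiber). The bend-and-break/RCC justification fails: breaking a rational curve of $R$ through a point of $F$ produces curves whose classes are again in $R$ (by extremality) and of smaller degree, but there is no reason these belong to the chosen component $H$, and curves of small degree need not deform at all with a point fixed, so the breaking process has no mechanism forcing it to terminate at a member of $H$. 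As written, your argument proves the inequality only for the component of $\mathrm{Locus}(R)$ containing $\mathrm{Locus}(H)$ and for its general fibers.

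The standard repair --- and the route of the cited proofs --- is to localize the minimality at the point rather than globally: pick a suitable point $x\in F$ (off the other components of $\mathrm{Locus}(R)$ and of the fiber), take a rational curve $C$ through $x$ with $[C]\in R$ of minimal $-K_X$-degree $d$ \emph{among curves of $R$ through $x$}; here $d\ge \ell(R)$ is harmless since $d$ enters the estimate with the right sign. Extremality of $R$ plus minimality through $x$ makes the pointed family $H_{C,x}$ proper (unsplit), so bend-and-break gives $\dim \mathrm{Locus}(H_{C,x})\ge \dim H_{C,x}+1$, while upper semicontinuity of fiber dimension of the evaluation map gives $\dim H_{C,x}\ge \dim H_{C,y}$ for $y$ general in $\mathrm{Locus}(H_C)$; combining with $\dim H_C\ge n+d-3$ and $\dim \mathrm{Locus}(H_C)\ge \dim H_C+1-\dim H_{C,y}$ yields $\dim \mathrm{Locus}(H_C)+\dim \mathrm{Locus}(H_{C,x})\ge n+d-1\ge n+\ell(R)-1$, and the containments $\mathrm{Locus}(H_C)\subset E$, $\mathrm{Locus}(H_{C,x})\subset F$ follow from irreducibility/connectedness together with the choice of $x$. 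Without this localization (or an equivalent device), your proof establishes a weaker statement than the one claimed.
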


\begin{proof} \cite[Theorem 1.1]{Wis} or \cite[Theorem 0.4]{Ion}. 
\end{proof}

\section{$1$-ample}

This section is about the cone of $1$-ample divisors. Here we prove Theorem \ref{1amp} stated in the introduction.

\begin{definition}\label{visible} Let $X$ be a projective variety. The $K_X$--visible part of $\partial\nef(X)$
is defined as
  \[  \vis:=\{ D\in\partial\nef(X)\vert \hbox{\ \ }\overline{K_X D}\cap\nef(X)=D\}\ .\]
Here $\overline{K_X D}$ denotes the line segment joining $K_X$ to $D$.
\end{definition}

\begin{remark} This notion is considered also in \cite[Theorem 1]{Kaw}.  The definition is interesting only when $K_X\not\in\nef(X)$; if $K_X$ is nef, the line segment $\overline{K_X D}$ contains more than one point and we have
  \[ \vis=\emptyset\ .\]
  The other extreme is when $X$ is Fano; then we have
  \[  \vis=\partial\nef(X)\ .\]
\end{remark}

\begin{theorem}\label{1amp} Let $X$ be a smooth projective variety. 

\item{(\rom1)} 
\[\partial\nef(X)_{\rm visible}\cap\inte\bigl(\mob(X)\bigr)\subset   \partial\overline{1\amp(X)}\ .\]

\item{(\rom2)} Suppose $X$ is not the blow--up of a smooth projective variety $Y$ along a smooth codimension $2$ subvariety. Then
  \[  \vis\cap\bigc(X) \subset \partial\overline{1\amp(X)}\ .\]
  
\item{(\rom3)} Suppose $X$ is not a conic bundle over a smooth projective variety $Y$, nor a blow--up of a smooth projective variety along a smooth codimension $2$ subvariety. Then
  \[  \vis \subset \partial\overline{1\amp(X)}\ .\]
\end{theorem}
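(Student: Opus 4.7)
The plan is to show, for every $D$ on the relevant portion of the $K_X$--visible boundary, that suitable divisors arbitrarily close to $D$ fail to be $1$--ample. Since $D\in\nef(X)\subset\overline{1\amp(X)}$ and $1\amp(X)$ is open, this forces $D\in\overline{1\amp(X)}\setminus 1\amp(X)=\partial\overline{1\amp(X)}$, as desired.

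First I would fix $D\in\vis$ and, for small rational $\epsilon>0$, introduce $D^\ast_\epsilon:=(1-\epsilon)D+\epsilon K_X$. The visibility hypothesis immediately yields $D^\ast_\epsilon\notin\nef(X)$. The key observation is that $D^\ast_\epsilon$ is positively proportional to the adjoint class $K_X+A_\epsilon$ with $A_\epsilon=\tfrac{1-\epsilon}{\epsilon}D$, which makes \cite{BCHM} applicable once $A_\epsilon$ is big and the pair can be made klt after $\QQ$--linear perturbation. In case (ii) bigness of $D$ handles this directly; in (i) and (iii), where $D$ need not be big, I would first replace $D$ by $D+\delta H$ for a fixed ample $H$ and small $\delta>0$, carry the argument through on this perturbation, and let $\delta\to 0$ at the end, using that the complement of $1\amp(X)$ is closed (in (i) the interior-of-mobile hypothesis guarantees the perturbation stays in a region where the argument is non-vacuous).

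Next I would run a $(K_X+A'_\epsilon)$-MMP $f\colon X\dashrightarrow X_{\min}$ via BCHM, which terminates with $f_\ast D^\ast_\epsilon$ nef. Since $D^\ast_\epsilon$ is not nef on $X$, the map $f$ is non-trivial, and at some step $\psi_i\colon X_i\to X_{i+1}$ we contract an extremal ray on which $(f_i)_\ast D^\ast_\epsilon$ is strictly negative. Applying Theorem \ref{dim1} to $\psi_i$, either some fibre $F$ of $\psi_i$ satisfies $\dim F\ge 2$, or else $\psi_i$ is a smooth codimension-$2$ blow-up or a conic bundle. In case (iii) both exceptions are globally forbidden on $X$; in case (ii) bigness of $D^\ast_\epsilon$ excludes Mori fibre-space contractions from the MMP (ruling out conic bundles), and the hypothesis on $X$, combined with the fact that flips do not create new divisorial structure, forces the codim-$2$ blow-up exception to be reflected in $X$ itself; case (i) is analogous with the mobile-interior hypothesis playing the role of bigness. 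In every case one therefore secures a fibre $F$ of some $\psi_i$ with $\dim F\ge 2$ on which $-(f_i)_\ast D^\ast_\epsilon$ is ample. I would then transfer back to $X$ via a common resolution $\pi\colon\widetilde X\to X$, $\pi_i\colon\widetilde X\to X_i$: the MMP negativity relation $\pi^\ast D^\ast_\epsilon=\pi_i^\ast(f_i)_\ast D^\ast_\epsilon+E$ (with $E$ effective exceptional, in the spirit of Proposition \ref{bbp} and the proof of Lemma \ref{indet}) gives, after restriction to the preimage of $F$ and pushforward to $X$, an irreducible subvariety $V\subset X$ with $\dim V\ge 2$ and $-D^\ast_\epsilon|_V$ pseudoeffective. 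Combining the standard restriction property of $q$-ampleness with Theorem \ref{n-1} applied to a two-dimensional slice of $V$ then shows $D^\ast_\epsilon\notin 1\amp(X)$; sending $\epsilon\to 0$ (and also $\delta\to 0$ in cases (i), (iii)) concludes the proof.

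The main technical obstacle I anticipate is in the third paragraph above: ruling out the Wi\'sniewski exceptions in cases (i) and (ii). A smooth codim-$2$ blow-up or conic-bundle contraction appearing deep in the MMP does not \emph{a priori} reflect such a structure on $X$ itself, so the argument requires careful tracking of how flips interact with divisorial contractions and Mori fibre spaces throughout the MMP, exploiting that flips introduce no new divisorial structure and that the bigness/mobile-interior hypotheses confine the possible terminal behaviour.
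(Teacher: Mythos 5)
Your overall framing---show that the non--nef perturbations $D^\ast_\epsilon=(1-\epsilon)D+\epsilon K_X$ fail to be $1$-ample, so that the nef class $D$ must lie on $\partial\overline{1\amp(X)}$---is sound, and it is essentially the contrapositive of the paper's argument (Propositions \ref{aux}, \ref{aux2}, \ref{aux3}). The genuine gap is \emph{where} you locate the offending contraction. The paper forces everything to happen at the \emph{first} contraction $\psi_0\colon X\to Z_0$, which is a morphism from the smooth variety $X$: stability of $L$ (a hypothesis you never arrange, although it is what makes Lemma \ref{indet} and the final step ``nef $\Rightarrow$ ample'' work) together with $\exc\subset B_+(L)$ and $\dim B_+(L)\le n-2$ (this, via Remark \ref{mobile}, is the real role of the hypothesis $L\in\inte\bigl(\mob(X)\bigr)$ in (\rom1), not mere ``non-vacuousness''), resp.\ bigness in (\rom2), forces $\psi_0$ to be small resp.\ birational; Theorem \ref{dim1} then produces a fibre $F\subset X$ of dimension $\ge 2$ with $-L\vert_F$ ample, and Theorem \ref{n-1} finishes, with no transfer between birational models ever needed. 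Your version instead allows the decisive contraction to occur at an arbitrary step $\psi_i\colon X_i\to X_{i+1}$ and then pulls the conclusion back to $X$. This fails on three counts: (a) Theorems \ref{dim1} and \ref{wis} require a smooth source, and $X_i$ is in general singular after the first flip, so invoking them for $i>0$ is illegitimate; (b) the negativity relation goes the wrong way for your purpose: along an $L$-MMP one has $\pi^\ast L=\pi_i^\ast (f_i)_\ast L+E$ with $E$ \emph{effective}, so on the preimage of a fibre $F\subset X_i$ the class $\pi^\ast D^\ast_\epsilon$ is an anti-ample pullback \emph{plus} an effective divisor, from which pseudoeffectivity of $-D^\ast_\epsilon$ on the image in $X$ does not follow; moreover that image can have dimension $<2$ (for instance when $F$ sits inside a flipped locus), so neither half of your claimed conclusion about $V\subset X$ is justified; (c) the hypotheses ``$X$ is not a codimension-$2$ blow-up / conic bundle'' constrain only morphisms defined on $X$ itself, and, as you yourself flag, a blow-up or Mori-fibre structure arising at a later step is not ``reflected in $X$''; this is not a technicality to be fixed by tracking flips---it is precisely the difficulty the paper's arrangement is designed to avoid.

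The repair is to never leave $X$: after reducing (using openness of $1\amp(X)$, $\inte\bigl(\mob(X)\bigr)$, $\bigc(X)$ and density of stable classes, Proposition \ref{dense}) to a stable adjoint class $L=K_X+A$ with $A$ ample, one shows the $L$-MMP of \cite{BCHM} cannot even start: in (\rom1) stability and $\dim B_+(L)\le n-2$ make $\psi_0$ small, in (\rom2) bigness makes it birational and the hypothesis on $X$ rules out the blow-up case, in (\rom3) both exceptional cases of Theorem \ref{dim1} are excluded outright (with the non-pseudoeffective case handled by a Mori fibre space on $X$ itself); in every case one gets a fibre of a morphism from $X$ of dimension $\ge 2$ on which $L$ is anti-ample, contradicting $1$-ampleness via Theorem \ref{n-1}---equivalently, in your contrapositive phrasing, directly certifying $D^\ast_\epsilon\notin 1\amp(X)$. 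Your $\epsilon$- and $\delta$-limiting devices are harmless but inessential once this is in place; also note that the paper gets an honest ample $A$ more cleanly by using that classes in the relative interior of $\vis$ are positive multiples of $K_X+A$ with $A$ ample, rather than perturbing a boundary-nef $D$ by $\delta H$.
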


\begin{proof} 

\item{(\rom1)} We will prove the following:

\begin{proposition}\label{aux} Let $L=K_X+A$, where $A$ is an ample $\RR$-divisor. Suppose $L$ is stable and
  \[ L\in 1\amp(X)\cap\inte\bigl( \mob(X)\bigr)\ .\]
Then $L$ is ample.
\end{proposition}  

This suffices to prove Theorem \ref{1amp}(\rom1). Indeed, 
suppose there is an element 
  \[ D\in\vis\cap\inte\bigl( \mob(X)\bigr)  \]
 that is in the interior of $\overline{1\amp(X)}$ (i.e. $D$ is $1$-ample). 
 Then we can also find 
   \[ D^\prime\in\vis^{\circ}\cap\inte\bigl( \mob(X)\bigr)  \]
 that is $1$-ample. Here $\vis^{\circ}$ denotes the relative interior of $\vis$. 
  By definition of the $K_X$--visible part, $D^\prime$ is of the form $D=m(K_X+A)$, for some ample $\RR$-divisor $A$ and $m\in\RR$. Now, ${1\over m}D^\prime=K_X+A$ is also
 in
 \[   \vis\cap\inte\bigl( \mob(X)\bigr)\cap 1\amp(X)\ .  \]
What's more, 
  \[D^{\prime\prime}=K_X+(1-\epsilon)A \in\inte\bigl(\mob(X)\bigr)\cap 1\amp(X)\] 
for $0<\epsilon$ small enough (since these are open cones). Since stable divisors are open and dense in $N^1X$, there exists $\epsilon>0$ such that $D^{\prime\prime}$ is stable. Then Proposition \ref{aux} implies that $D^{\prime\prime}$ is ample, and hence $D^\prime$ is ample: contradiction.

So let's prove Proposition \ref{aux}. 



Since $A$ is ample, there exists an effective $\RR$-divisor $\Delta$ numerically equivalent to $A$ and such that $(X,\Delta)$ is klt. According to \cite[Theorem 1.2]{BCHM}, there is an $L$--MMP
  \[ \phi\colon X=X_0 -  \to X_1 - \to \cdots  - \to X_{min},\]
where $\phi_\ast L$ on $X_{min}$ is nef. Each step $\phi_i\colon X_i  -\to X_{i+1}$ in the program is the flip of a morphism 
  \[\psi_i\colon X_i\to Z_i\ ,\]
where $\psi_i$ is the (birational) contraction of an $L$--negative extremal ray. Since $L$ is stable, the exceptional locus of $\phi$ is contained in $B_+(L)$ (Lemma \ref{indet}), hence it is of dimension $\le n-2$ (where $n=\dim X$).  That is, all the $\psi_i$ in the program must be small contractions. Consider now the first of these small contractions
  \[ \psi=\psi_0\colon X\to Z_0\ .\]
Since $K_X<L$, $\psi$ is the contraction of a $K_X$--negative extremal ray.  
If all fibres of $\psi$ are of dimension $\le 1$, the contraction $\psi$ cannot be small by Theorem \ref{dim1}, so there must exist a fibre with an irreducible component $F$ of dimension $f\ge 2$. Since $-L$ is $\psi$--ample, we have
  \[   -L\vert_F \in\amp(F)\subset\bigc(F)\ .\]
Using Theorem \ref{n-1}, this implies
  \[   L\vert_F\not\in (f-1)\amp(F)\ .\]
But this leads to a contradiction: $L$ is $1$-ample, so the restriction to any subvariety must be $1$-ample as well.

We find that $\psi$ is the identity, so the MMP cannot get started and $X=X_{min}$. That is, $L$ must be nef. Since $L$ is stable, $B_+(L)=B_-(L)=\emptyset$ and $L$ is ample.  
 
\item{(\rom2)} In analogous fashion to the proof of (\rom1), it will suffice to prove:

\begin{proposition}\label{aux2} Let $X$ be as in Theorem \ref{1amp}(\rom2), and
let $L=K_X+A$, where $A$ is an ample $\RR$-divisor. Suppose $L$ is stable and
  \[ L\in 1\amp(X)\cap \bigc(X)\ .\]
Then $L$ is ample.
\end{proposition}  

To prove the proposition, consider again an $L$--MMP (which exists thanks to \cite[]{BCHM}). Let
  \[ \psi\colon X\to Z\]
be the first contraction of the program. Since $L$ is big, the contraction $\psi$ is birational. Just as above, we find that $\psi$ cannot be small, so $\psi$ must be a divisorial contraction. If all fibres of $\psi$ have dimension $\le 1$, $\psi$ is a blow--up of a smooth projective $Y$ with smooth center of codimension $2$ (Theorem \ref{dim1}); this is excluded by hypothesis. So there must be a fibre with an irreducible component $F$ of dimension $\ge 2$, which again contradicts the fact that
$L\vert_F$ is $1$-ample.

\item{(\rom3)} It will suffice to prove the following statement:

\begin{proposition}\label{aux3} Let $X$ be as in Theorem \ref{1amp}(\rom3), and let $L=K_X+A$, where $A$ is an ample $\RR$-divisor. Suppose $L$ is stable and $1$-ample. Then $L$ is nef.
\end{proposition}

We first remark that in case $L$ is big, Proposition \ref{aux3} follows from Proposition \ref{aux2}. In case $L$ is pseudo--effective, $L$ is a limit of big divisors which are stable and $1$-ample, and it follows from Proposition \ref{aux2} that $L$ is nef.
Suppose $L$ is not pseudo--effective. According to \cite[Corollary 1.3.2]{BCHM}, there exists an $L$--MMP such that on $X_{min}$ 
there is a Mori fibre space structure, i.e. a morphism 
  \[ g\colon X_{min}\to Y\]
  such that $-\phi_\ast L$ is $g$--ample. Just as in case (\rom2), we find there can be no birational contraction in the program, so we have $X=X_{min}$. If the Mori fibre space has only fibres of dimension $1$, it is a conic bundle over a smooth $Y$ (Theorem \ref{dim1}), which is excluded by hypothesis. So there exists a fibre of $g$ with an irreducible component $F$ of dimension $\ge 2$; this contradicts the fact that 
  \[L\vert_F\in 1\amp(F)\ .\]
\end{proof}

\begin{corollary}\label{kx1amp} Let $X$ be a smooth projective variety, and suppose $K_X$ is $1$-ample. Then
  \[ \vis\subset \partial\overline{\mob(X)}\ .\]
\end{corollary}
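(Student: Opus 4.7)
The plan is to deduce Corollary~\ref{kx1amp} from Theorem~\ref{1amp}(i) by rerunning the contradiction argument that proves Theorem~\ref{1amp}(i), but with the role of the ``$1$-ample test divisor'' shifted from $D$ to $K_X$ itself. As a first reduction, I observe that every $D\in\vis$ lies in $\nef(X)=\overline{\amp(X)}\subset\overline{\mob(X)}$ since each ample divisor is mobile, and since $\mob(X)$ is a convex cone with non-empty interior I have $\inte(\mob(X))=\inte(\overline{\mob(X)})$. Hence $\partial\overline{\mob(X)}=\overline{\mob(X)}\setminus\inte(\mob(X))$, and the corollary is equivalent to
\[ \vis\cap\inte(\mob(X))=\emptyset. \]

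Suppose, for a contradiction, that some $D\in\vis\cap\inte(\mob(X))$ exists. Copying the opening paragraphs of the proof of Theorem~\ref{1amp}(i), and using both the density of $\vis^\circ$ in $\vis$ and the openness of $\inte(\mob(X))$, I perturb $D$ to obtain $D'\in\vis^\circ\cap\inte(\mob(X))$; by the characterisation of $\vis^\circ$ recorded in that proof, I can then write $D'=m(K_X+A)$ for some $m>0$ and some ample $\RR$-divisor $A$. Next I set
\[ D'':=K_X+(1-\epsilon)A, \]
choosing $\epsilon>0$ small enough that $D''\in\inte(\mob(X))$ (which is an open cone) and, by the density statement of Proposition~\ref{dense}, such that $D''$ is stable. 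The decisive new input -- and the only place in the argument where the hypothesis of the corollary enters -- is that $D''$ is automatically $1$-ample: since $K_X\in 1\amp(X)$ and $(1-\epsilon)A\in\amp(X)\subset 1\amp(X)$, convexity of the $1$-ample cone yields $D''\in 1\amp(X)$.

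Proposition~\ref{aux} now applies to $D''$ and forces it to be ample. Consequently
\[ D'=m(K_X+A)=mD''+m\epsilon A \]
is a positive combination of two ample $\RR$-divisors, hence itself ample, contradicting $D'\in\vis^\circ\subset\partial\nef(X)$. I expect the only mildly delicate step to be coordinating the perturbation so that $D'$ lands in $\vis^\circ\cap\inte(\mob(X))$ while also allowing $\epsilon$ to be chosen making $D''$ stable and in $\inte(\mob(X))$; each of these manipulations, however, is already carried out in the proof of Theorem~\ref{1amp}(i), so no genuinely new technical difficulty arises.
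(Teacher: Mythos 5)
Your argument is correct and is in substance the paper's own: the paper deduces the corollary in two lines from the statement of Theorem \ref{1amp}(\rom1) --- if $K_X$ is $1$-ample then every $L\in\vis^{\circ}$ is a sum of an ample and a $1$-ample divisor, hence $1$-ample, hence (being in the open cone $1\amp(X)$, not its boundary) cannot lie in $\inte\bigl(\mob(X)\bigr)$ --- whereas you re-run the reduction from the proof of Theorem \ref{1amp}(\rom1) and invoke Proposition \ref{aux} directly; the content is the same, just unpacked. One justification should be repaired: you obtain the $1$-ampleness of $D''=K_X+(1-\epsilon)A$ from ``convexity of the $1$-ample cone'', but convexity of $q\amp(X)$ is not an available property --- it is not known for intermediate $q$, and it genuinely fails for $q=n-1$, since by Theorem \ref{n-1} that cone is the complement of $-\psef(X)$, which is in general non-convex (so already on surfaces the $1$-ample cone need not be convex). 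What your step actually requires, and what is true, is only that the sum of a $1$-ample divisor and an ample $\RR$-divisor is $1$-ample; this holds by the very definition of $q$-ampleness for $\RR$-divisors together with \cite[Theorem 8.3]{Tot}, and is exactly the justification the paper uses at the corresponding point. With that citation substituted, your proof goes through as written.
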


\begin{proof} It suffices to prove that 
  the relative interior $\vis^{\circ}$ is in the boundary of the mobile cone.
  But if $K_X$ is $1$-ample, every $L$ on $\vis^{\circ}$ is also $1$-ample (since $L$ is a sum of ample plus $1$-ample). But then Theorem \ref{1amp}(\rom1) implies that $L$ cannot live in the interior of $\mob(X)$.
\end{proof}

\begin{corollary}\label{convex} Let $X$ be a smooth projective variety, and suppose
  \[ \vis\cap\inte\bigl(\mob(X)\bigr)\not=\emptyset\ .\]
  Then $1\amp(X)$ is a strictly convex cone.
\end{corollary}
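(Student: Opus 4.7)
The plan is to argue by contradiction. Assume $1\amp(X)$ fails to be strictly convex, so there is a non-zero class $L\in N^1(X)$ with both $L$ and $-L$ in $1\amp(X)$. The goal is to derive a contradiction by producing a point of $\partial\nef(X)$ that is realised as a convex combination of two ample $\RR$-divisors.

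First, I would use the hypothesis $\vis\cap\inte(\mob(X))\neq\emptyset$ together with the density of the relative interior $\vis^\circ$ in $\vis$ to pick $D\in\vis^\circ\cap\inte(\mob(X))$. Following the normalisation used in the proof of Theorem \ref{1amp}(i), after rescaling by a positive scalar I may assume $D=K_X+A$ with $A$ an ample $\RR$-divisor. By Theorem \ref{1amp}(i) applied to $D$, we already know $D\in\partial\overline{1\amp(X)}\subset\overline{1\amp(X)}$.

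Next, for $\epsilon>0$ small, I would form the two symmetric perturbations
\[L^\pm\;:=\;D\pm\epsilon L\;=\;K_X+(A\pm\epsilon L).\]
I would then verify that both $L^+$ and $L^-$ meet the hypotheses of Proposition \ref{aux}: for $\epsilon$ small, $A\pm\epsilon L$ is ample (openness of $\amp(X)$), so $L^\pm$ has the required form $K_X+(\text{ample})$; each $L^\pm$ remains in $\inte(\mob(X))$ by openness; and each lies in $1\amp(X)$, since $D\in\overline{1\amp(X)}$ and $\pm\epsilon L\in 1\amp(X)$ together give
\[L^\pm\;\in\;\overline{1\amp(X)}+1\amp(X)\;\subset\;1\amp(X)\]
(a general feature of open convex cones). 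Finally, by Proposition \ref{dense} I can arrange in addition that both $L^\pm$ are stable. Proposition \ref{aux} then applies separately to $L^+$ and $L^-$ and forces both to be ample. But the midpoint
\[D\;=\;\tfrac{1}{2}\bigl(L^+ + L^-\bigr)\]
is therefore ample by convexity of $\amp(X)$, contradicting $D\in\vis\subset\partial\nef(X)=\nef(X)\setminus\amp(X)$.

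The only delicate point is the \emph{simultaneous} verification of the four hypotheses of Proposition \ref{aux} ($K_X+(\text{ample})$ form, membership in $\inte(\mob(X))$, membership in $1\amp(X)$, stability) for both of $L^+$ and $L^-$. This is precisely where the symmetric assumption $\pm L\in 1\amp(X)$ is exploited: it is what forces both $L^\pm$ into the open cone $1\amp(X)$ and thereby converts the single visible boundary point $D\in\partial\nef(X)$ into an obstruction to the existence of a lineality direction in $\overline{1\amp(X)}$.
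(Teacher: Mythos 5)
Your strategy is genuinely different from the paper's. The paper deduces the corollary from Theorem \ref{1amp}(\rom1) by a case analysis on the Picard number: for $\rho\ge 3$ it uses local rational polyhedrality of $\vis$ (the cone theorem, as in \cite{Kaw}) to find a ray of $\vis$ in the relative interior of a face of $\nef(X)$, and then shows that $1\amp(X)$ lies on one side of the corresponding supporting hyperplane, using Theorem \ref{n-1} together with the fact that ample plus $1$-ample is $1$-ample; the cases $\rho\le 2$ are treated separately via Theorem \ref{n-1}. You instead go back to Proposition \ref{aux} and run a direct symmetric-perturbation argument: a lineality direction $\pm L\in 1\amp(X)$ would exhibit a visible boundary point $D$ of $\nef(X)$ as a midpoint of two ample classes. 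This avoids the Picard-rank case distinction and the cone theorem, at the (minor) cost of proving only pointedness (no line through the origin), whereas the paper's $\rho\ge3$ argument actually produces a supporting hyperplane for $1\amp(X)$.

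There is, however, one step whose justification is genuinely flawed: to place $L^{\pm}=D\pm\epsilon L$ in $1\amp(X)$ you invoke $\overline{1\amp(X)}+1\amp(X)\subset 1\amp(X)$ as ``a general feature of open convex cones''. Convexity of $1\amp(X)$ is not available -- it is precisely one of the open questions about intermediate $q$-ample cones (cf. \cite[section 11]{Tot}); in this paper $1\amp(X)$ is the set of $1$-ample classes, and the only additivity ever used is that ample plus $q$-ample is $q$-ample (\cite[Theorem 8.3]{Tot}). So as written this step rests on an unproven (and, in this context, near-circular) assertion. Fortunately the conclusion is true for a different reason: $D$ is nef, since $D\in\partial\nef(X)$. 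Choose $\delta>0$ small enough that $\pm\epsilon L-\delta A'$ is still $1$-ample (openness of $1\amp(X)$, $A'$ ample); then $D\pm\epsilon L=(D+\delta A')+(\pm\epsilon L-\delta A')$ is ample plus $1$-ample, hence $1$-ample. With this repair your argument goes through. A smaller point: arranging both $L^{+}$ and $L^{-}$ stable for a common $\epsilon$ via Proposition \ref{dense} requires density of stable classes along a segment, which is the same level of rigour as the paper's own use of that proposition in the proof of Theorem \ref{1amp}; if you prefer, you can decouple the perturbations, since it suffices that $D+\epsilon_1 L$ and $D-\epsilon_2 L$ be ample for possibly different small $\epsilon_1,\epsilon_2>0$ -- $D$ is still a convex combination of these two classes, contradicting $D\in\partial\nef(X)$.
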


\begin{proof} The hypothesis implies that the dimension of $X$ is at least $3$. In case the Picard number of $X$ is $1$, the statement is clear from Theorem \ref{n-1}. Suppose the Picard number is $2$. The cone $\overline{1\amp(X)}$ has $2$ extremal rays, and by Theorem \ref{1amp}(\rom1) one of them is also an extremal ray of $\nef(X)$. On the other hand, $\overline{1\amp(X)}$ lies outside of $-\amp(X)$ (Theorem \ref{n-1}), so $\overline{1\amp(X)}$ must be convex.

The argument for Picard number $\ge 3$ is similar: in this case, we have
  \[ \dim \vis \ge 2\ ,\]
which means that $\vis$ contains infinitely many rays. Since the visible part is locally rationally polyhedral (this is the cone theorem, stated in this form in  \cite[Theorem 1]{Kaw}), there exists a ray 
  \[R\in \vis\]
 which lies in the relative interior of a face $F$ of $\nef(X)$. Let $h\subset N^1X_{\RR}$ denote the unique hyperplane containing $F$; the claim is now that $1\amp(X)$ lies on one side of $h$. To see this, suppose (by contradiction) there exists a divisor $D\in 1\amp(X)$ which lies on the ``non--ample'' side of $h$. Let $h_2\subset N^1X_{\RR}$ denote the $2$--plane spanned by $R$ and $D$. We find that any divisor $L\in R$ can be written
   \[ L=mD+A \ ,\]
  for some $m\in\RR_{>0}$ and $A$ ample (this is most easily seen by restricting attention to the $2$--plane $h_2$: by construction, $h_2$ meets $\amp(X)$, and $D$ lies outside of $-\amp(X)\cap h_2$, again by Theorem \ref{n-1}). But then $L$ is $1$--ample, contradicting Theorem \ref{1amp}(\rom1). 

\end{proof}

\begin{corollary}{("almost Fano")}\label{fano} Let $X$ be a smooth projective complex variety, and suppose that either (1) $-K_X$ is ample, or (2) $-K_X$ is $\not=0$ and nef and $\dim N^1X\ge 3$. Then:

\item{(\rom1)} 
\[\partial\nef(X)\cap\inte\bigl(\mob(X)\bigr)\] is in the boundary of $\overline{1\amp(X)}$.

\item{(\rom2)} Suppose $X$ is not the blow--up of a smooth projective variety $Y$ along a smooth codimension $2$ subvariety. Then
  \[  \partial\nef(X)\cap\bigc(X) \subset \partial\overline{1\amp(X)}\ .\]
  
\item{(\rom3)} Suppose $X$ is not a conic bundle over a smooth projective variety $Y$, nor a blow--up of a smooth projective variety along a smooth codimension $2$ subvariety. Then
  \[  \amp(X)=1\amp(X)\ .\]
\end{corollary}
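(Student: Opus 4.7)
The plan is to deduce this corollary from Theorem~\ref{1amp} by showing that in both almost Fano settings, the $K_X$-visible part $\vis$ essentially exhausts $\partial\nef(X)$, so that the content of Theorem~\ref{1amp} transfers from $\vis$ to $\partial\nef(X)$ intersected with the cones in the statement.

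Under hypothesis~(1), $-K_X$ ample, I would first claim $\vis=\partial\nef(X)$. Let $D\in\partial\nef(X)$ and suppose some $p_t=(1-t)D+tK_X$ with $t\in(0,1]$ were nef. Since $D$ is nef and $-K_X$ is ample, the divisor $D-K_X$ is ample, so
\[ D=p_t+t(D-K_X) \]
is nef~$+$~ample~$=$~ample, contradicting $D\in\partial\nef(X)$. With $\vis=\partial\nef(X)$ in hand, parts (i) and (ii) are immediate from Theorem~\ref{1amp}(i), (ii). For (iii), Theorem~\ref{1amp}(iii) gives $\partial\nef(X)\subset\partial\overline{1\amp(X)}$. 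Combined with $\nef(X)\subset\overline{1\amp(X)}$ (which always holds, as $\amp\subset 1\amp$), an elementary convexity argument---any interior point of $\overline{1\amp(X)}\setminus\nef(X)$ would lie on an open segment to an ample divisor crossing $\partial\nef(X)$ at a point in the interior of $\overline{1\amp(X)}$, contradicting $\partial\nef(X)\subset\partial\overline{1\amp(X)}$---forces $\nef(X)=\overline{1\amp(X)}$, hence $\amp(X)=1\amp(X)$.

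Under hypothesis~(2), $-K_X$ nonzero nef with $\dim N^1(X)\ge 3$, the argument from (1) still yields $D\in\vis$ whenever $D$ lies on a facet of $\nef(X)$ that does not contain $-K_X$: the supporting hyperplane of such a facet is strictly positive on $-K_X$, which pushes $p_t$ outside $\nef(X)$ for $t>0$. The exceptional locus $\partial\nef(X)\setminus\vis$ is then contained in the union of proper faces of $\nef(X)$ meeting $-K_X$. I would argue this locus is disjoint from $\inte(\mob(X))$ for (i) and from $\bigc(X)$ for (ii): any $D$ on such a face is numerically trivial on a (necessarily $K_X$-trivial) extremal ray whose contraction collapses positive-dimensional fibers, forcing $B_+(D)$ to have high dimension and preventing $D$ from being big. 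Parts (i), (ii) then follow from Theorem~\ref{1amp}. For (iii), the exclusion of conic bundles and blow-ups together with $\dim N^1(X)\ge 3$ allows an MMP argument in the spirit of Proposition~\ref{aux3}, rerunning an $L$-MMP for a hypothetical 1-ample non-ample $L$ on the exceptional locus and deriving a contradiction via Theorems~\ref{dim1} and~\ref{wis}; then $\partial\nef(X)\subset\partial\overline{1\amp(X)}$ and the convexity argument of case~(1) delivers $\amp(X)=1\amp(X)$.

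The main obstacle is case~(2): one has to carefully rule out candidate 1-ample divisors on the ``invisible'' faces of $\nef(X)$ touching $-K_X$. This requires a combination of the Picard-rank assumption $\dim N^1(X)\ge 3$, the structural hypotheses on $X$ (absence of the specified blow-ups and conic bundles), and the MMP/contraction analysis already underlying Theorem~\ref{1amp}.
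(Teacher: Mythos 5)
Your case (1) is correct and is essentially the paper's own argument: when $-K_X$ is ample one has $\vis=\partial\nef(X)$, and your segment argument for (iii) (ample plus $1$-ample is $1$-ample, openness of $1\amp(X)$, so a $1$-ample class off $\nef(X)$ would produce a $1$-ample point of $\partial\overline{1\amp(X)}$) is exactly the device used in the paper.

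The gap is in case (2), in the claim that the invisible locus $\partial\nef(X)\setminus\vis$ is disjoint from $\inte\bigl(\mob(X)\bigr)$ (for (i)) and from $\bigc(X)$ (for (ii)). The justification you offer does not work: an invisible class is indeed trivial only on $K_X$-\emph{trivial} extremal rays, but the contraction theorem produces contractions only of $K_X$-\emph{negative} rays, so there need be no contraction at all; and even when a ($K_X$-trivial, crepant) contraction exists, collapsing positive-dimensional fibres over a proper subvariety neither prevents bigness nor forces $\dim B_+(D)\ge n-1$ (pullbacks of ample classes under birational morphisms are big with $B_+$ equal to the exceptional locus). Worse, the claim itself is false: the ray $k$ spanned by $-K_X$ is always invisible, and whenever $-K_X$ is big and nef with small non-ample locus --- e.g.\ a weak Fano threefold whose anticanonical morphism contracts only finitely many curves, so that $\dim B_+(-K_X)=1\le n-2$ --- the class $-K_X$ lies in $\partial\nef(X)\cap\inte\bigl(\mob(X)\bigr)$ and is even $1$-ample (it is the pullback of an ample class under a morphism with fibres of dimension $\le 1$). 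So no argument of the form ``invisible classes cannot lie in $\inte(\mob)$ or $\bigc$'' can succeed, and this is precisely where the paper proceeds differently: it reduces the invisible part to the ray $k$ and then rules out $k\subset\inte\bigl(\mob(X)\bigr)\cap 1\amp(X)$ by a dedicated perturbation argument --- set $D=-K_X-\epsilon A$, choose a second ample class $A'$ \emph{outside} the plane spanned by $A$ and $k$ (this is the only place the hypothesis $\dim N^1X\ge 3$ is used), and derive a contradiction at the point where the segment from $A'$ to $D$ meets the visible part of the boundary. Your proposal supplies no argument at all for the ray $k$, and indeed parts (i) and (ii) of your proof never use $\dim N^1X\ge 3$; since for Picard number $2$ the corresponding statement fails (the weak Fano example above), a proof that does not invoke this hypothesis cannot be complete.

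A smaller point: your sketch of (iii) (``rerun an $L$-MMP on the exceptional locus'') cannot be carried out as stated, because Propositions \ref{aux}, \ref{aux2} and \ref{aux3} apply only to adjoint classes $K_X+A$ with $A$ ample, i.e.\ exactly to the visible directions; on the invisible faces there is no such divisor to run the MMP for, and again the missing ingredient is the paper's perturbation step using $\dim N^1X\ge 3$.
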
   

\begin{proof}

\item{(\rom1)} 
 If $-K_X$ is ample, clearly
  \[ \vis=\partial\nef(X)\ \]
  and we are done.
 Suppose now 
   \[-K_X\in\partial\nef(X)\setminus \{0\}\ .\]
Then we have
   \[  \vis=\partial\nef(X)\setminus k\ ,\]
  where $k$ denotes the ray generated by $-K_X$.  Applying Theorem \ref{1amp}(\rom1), we find an inclusion
  \[  \Bigl(\partial\nef(X)\setminus k\Bigr)\cap\inte\bigl( \mob(X)\bigr)\subset \partial\overline{1\amp(X)}\ .\]
  Suppose (\rom1) is not true, i.e. 
  \[   k\subset \inte\bigl(\mob(X)\bigr)\cap 1\amp(X)\ .\]
 Then, since $1\amp$ is an open cone, 
   \[   D:=-K_X-\epsilon A \in 1\amp(X)\]
for any ample $A$ and $\epsilon$ sufficiently small.  On the other hand, $D$ lies outside the closed cone $\nef(X)$. Let's pick an ample $\RR$-divisor $A^\prime$ close to $A$, but outside the plane spanned by $A$ and $k$ (this is possible if the ample cone has dimension $\ge 3$).
Then the line segment connecting $A^\prime$ to $D$ crosses
  \[ \Bigl(\partial\nef(X)\setminus k\Bigr)\cap \inte\bigl(\mob(X)\bigr)\ ;\]
let's call the point of intersection $B$. The $\RR$-divisor $B$ is a sum of ample and $1$-ample, hence $B$ is $1$-ample \cite[Theorem 8.3]{Tot}. On the other hand, $B$ lies in the boundary of  $\overline{1\amp(X)}$ and the $1$-ample cone is open, so $B$ cannot be $1$-ample: contradiction.

\item{(\rom2) and (\rom3)} Similar.    
\end{proof}

\begin{remark} Suppose $X$ is Fano, i.e. $-K_X$ is ample. The pseudo--index of $X$ is defined as
  \[ \tau(X)=\min \{ -K_X\cdot C\vert \ C\subset X\hbox{\ rational\ curve}\}\ .\]
If $\tau(X)$ is $\ge 2$ (respectively $\ge 3$), the hypothesis of Corollary \ref{fano}(\rom2) (respectively (\rom3)) is satisfied (this follows from Theorem \ref{wis}). In this way, we recover \cite[Proposition 29]{Lat} as a special case of Corollary \ref{fano}.
\end{remark}

\begin{corollary}{("weak Lefschetz")}\label{weak} Let $X$ be a smooth projective complex variety of dimension $n\ge 3$, and let $Y\subset X$ be a generic hyperplane section.

\item{(\rom1)} 
\[\vis\cap\inte\bigl(\mob(X)\bigr)\subset  \partial\nef(Y)\cap\partial\nef(X)\ .\]

\item{(\rom2)} Suppose $X$ is not the blow--up of a smooth projective variety $Y$ along a codimension $2$ smooth subvariety. Then
  \[  \vis\cap\bigc(X) \subset \partial\nef(Y)\cap\partial\nef(X)\ .\]
  
\item{(\rom3)} Suppose $X$ is not a conic bundle over a smooth projective variety, nor a blow--up of a smooth projective variety along a smooth codimension $2$ subvariety. Then
  \[  \vis \subset \partial\nef(Y)\cap\partial\nef(X)\ .\]
\end{corollary}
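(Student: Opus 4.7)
The plan is to deduce all three items from Theorem \ref{1amp} combined with the Demailly--Peternell--Schneider / K\"uronya result quoted in the introduction, namely: if $D$ is an $\RR$--divisor on $X$ whose restriction $D\vert_Y$ to an ample smooth hypersurface $Y\subset X$ is ample on $Y$, then $D$ is $1$-ample on $X$. This is the single external input beyond Theorem \ref{1amp}. The identification $N^1X\cong N^1Y$ (via pullback, for $n\ge 4$ by Lefschetz, and as a subspace for $n=3$) is what makes the intersection $\partial\nef(X)\cap\partial\nef(Y)$ meaningful.

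Let $D$ belong to the left--hand side of (\rom1), (\rom2) or (\rom3) respectively. Since $\vis\subset\partial\nef(X)\subset\nef(X)$, the membership $D\in\partial\nef(X)$ is automatic, and moreover the restriction $D\vert_Y$ is nef on $Y$ (nef restricts to nef under $Y\hookrightarrow X$). Consequently it suffices to prove that $D\vert_Y$ fails to be ample on $Y$: for then $D\vert_Y\in\nef(Y)\setminus\amp(Y)=\partial\nef(Y)$, which yields the desired inclusion.

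Suppose for contradiction that $D\vert_Y$ is ample on $Y$. By the DPS/K\"uronya restriction theorem, $D$ is then $1$-ample on $X$, i.e.\ $D\in 1\amp(X)$. On the other hand, Theorem \ref{1amp} applied to the relevant item gives
\[
D\;\in\;\partial\overline{1\amp(X)}\ ,
\]
and since $1\amp(X)$ is an open cone, this means $D\notin 1\amp(X)$. Contradiction. The case distinctions (\rom1)--(\rom3) of Corollary \ref{weak} simply correspond to feeding the hypothesis of the matching item of Theorem \ref{1amp}: in (\rom1) one uses Theorem \ref{1amp}(\rom1); in (\rom2) the additional assumption that $X$ is not a blow--up along a smooth codimension $2$ subvariety lets one invoke Theorem \ref{1amp}(\rom2); in (\rom3) the conic--bundle exclusion is precisely what allows Theorem \ref{1amp}(\rom3) to be applied. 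There is no real obstacle to overcome; the only point meriting care is checking that the DPS theorem applies in the precise form needed (an ample hyperplane section suffices), which is exactly its statement.
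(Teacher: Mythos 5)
Your proof is correct and follows exactly the route the paper intends: the paper gives no separate proof of Corollary \ref{weak}, but the introduction explains it is deduced from Theorem \ref{1amp} together with the Demailly--Peternell--Schneider/K\"uronya restriction theorem (i.e.\ the inclusion $H1\amp(X)\subset 1\amp(X)$ of Proposition \ref{inclusions}), precisely as you argue. Your observation that $D\vert_Y$ ample would force $D\in 1\amp(X)$, contradicting $D\in\partial\overline{1\amp(X)}$ with $1\amp(X)$ open, is the same contradiction the author has in mind.
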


The following is an alternative formulation of Corollary \ref{weak}(\rom1). The reformulation of points (\rom2) and (\rom3) is left to the diligent reader.

\begin{corollary}{("ampleness criterion")} Let $X$ be a smooth projective variety of dimension $n\ge 3$, and let $L$ on $X$ be a divisor of the form $L=K_X+A$, with $A$ an ample $\RR$-divisor. Suppose $L\in\inte\bigl(\mob(X)\bigr)$. Then $L$ is ample if and only if $L\vert_Y$ is ample for some generic hyperplane $Y\subset X$.
\end{corollary}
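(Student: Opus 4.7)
The plan is to reduce the backward implication to Proposition \ref{aux} after one perturbation. The forward direction is immediate: if $L$ is ample then its restriction to any smooth subvariety---in particular to a generic hyperplane section---is automatically ample.

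For the backward direction, assume $L|_Y$ is ample for some generic hyperplane $Y\subset X$, i.e.\ for a generic member of $|H|$ for some very ample $H$. Then $L$ lies in $H1\amp(X)$, and the Demailly--Peternell--Schneider inclusion $H1\amp(X)\subset 1\amp(X)$ recorded in Proposition \ref{inclusions} places $L$ in $1\amp(X)$. Combined with the hypothesis $L\in\inte\bigl(\mob(X)\bigr)$ and the adjoint form $L=K_X+A$ with $A$ ample, this is exactly the setting of Proposition \ref{aux}, except that $L$ may fail to be stable. To remedy this I would replace $L$ by the one-parameter family $L_\epsilon:=L-\epsilon A=K_X+(1-\epsilon)A$. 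For all sufficiently small $\epsilon>0$, openness of $1\amp(X)$ and of $\inte\bigl(\mob(X)\bigr)$ keeps $L_\epsilon$ in their intersection, the $\RR$-divisor $(1-\epsilon)A$ is again ample so $L_\epsilon$ retains its adjoint form, and by Proposition \ref{dense} the density of stable divisors allows me to choose $\epsilon$ for which $L_\epsilon$ is moreover stable. Proposition \ref{aux} then yields that $L_\epsilon$ is ample, and finally $L=L_\epsilon+\epsilon A$ is ample as a sum of two ample $\RR$-divisors.

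The only delicate point is to ensure that the single scalar $\epsilon$ can be chosen so that $L_\epsilon$ simultaneously lies in the open cone $1\amp(X)\cap\inte\bigl(\mob(X)\bigr)$ and is stable. This combination of openness with density of stable divisors is precisely the trick already used in the proof of Theorem \ref{1amp}(\rom1), so no genuine new obstacle arises.
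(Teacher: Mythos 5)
Your proof is correct and essentially matches the paper's intended argument: the paper presents this corollary as a reformulation of Corollary \ref{weak}(\rom1), whose (implicit) proof rests on exactly the two ingredients you use, namely the inclusion $H1\amp(X)\subset 1\amp(X)$ from Proposition \ref{inclusions} and Proposition \ref{aux} applied after the perturbation-to-a-stable-divisor trick from the proof of Theorem \ref{1amp}(\rom1). The delicate point you flag (choosing $\epsilon$ so that $K_X+(1-\epsilon)A$ is stable via openness and density of stable classes) is precisely the step the paper itself uses, so your write-up introduces no new gap.
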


Combining Corollaries \ref{fano} and \ref{weak}, we get in particular:

\begin{corollary}{("weak Lefschetz for almost Fano")}\label{weakfano} Let $X$ be a smooth projective complex variety of dimension $n\ge 3$. Suppose either (1) $-K_X$ is ample, or (2) $-K_X$ is nef and $\not=0$ and $\dim N^1X\ge 3$. Let $Y\subset X$ be a very ample divisor, generic in its linear system.

\item{(\rom1)} 
  \[ \partial\nef(X)\cap\inte\bigl( \mob(X)\bigr)\subset \partial\nef(Y)\cap\partial\nef(X)\ .\]
  
\item{(\rom2)} Suppose $X$ is not the blow--up of a smooth variety along a smooth codimension $2$ subvariety.  Then
  \[ \partial\nef(X)\cap \bigc(X)\subset \partial\nef(Y)\cap\partial\nef(X)\ .\]
  
\item{(\rom3)} Suppose $X$ is not a conic bundle over a smooth projective variety, nor a blow--up of a smooth projective variety along a smooth codimension $2$ subvariety. Then
  \[   \partial\nef(X)\subset \partial\nef(Y)\ .\]
  
\item{(\rom4)} Let $X$ be as in (\rom3) and $n\ge 4$. Then restriction induces an isomorphism
  \[   \nef(X)\cong\nef(Y)\ .\]  
\end{corollary}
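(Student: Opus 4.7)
The plan is to derive this corollary by stitching together Corollary \ref{fano} and Corollary \ref{weak}. The almost Fano hypothesis forces the $K_X$--visible part to cover essentially all of $\partial\nef(X)$: all of it in case (1), and all of it except the ray $k$ through $-K_X$ in case (2). Given this, parts (\rom1), (\rom2), (\rom3) should follow by specialising the matching parts of Corollary \ref{weak}, modulo one technical point: handling the stray ray $k$ when $-K_X\in\partial\nef(X)\setminus\{0\}$.

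First I would dispose of case (1), where $\vis=\partial\nef(X)$ and the three inclusions are immediate from the corresponding parts of Corollary \ref{weak}. In case (2), Corollary \ref{weak} already gives what we want on $\partial\nef(X)\setminus k$, so the remaining task is to cover points $D\in k\cap U$, where $U$ denotes the relevant open subcone---$\inte\bigl(\mob(X)\bigr)$ in (\rom1), $\bigc(X)$ in (\rom2), or all of $N^1X$ in (\rom3). For such a $D$, I would approximate by a sequence $D_m\in(\partial\nef(X)\setminus k)\cap U$. Such a sequence exists because $\dim N^1X\ge 3$ forces $\partial\nef(X)$ to have local dimension $\ge 2$ at every nonzero point (as the boundary of a full--dimensional closed convex cone), so removing the one--dimensional ray $k$ still leaves a subset dense near $D$, while the open set $U$ contains a full neighborhood of $D$. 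Each $D_m$ then lies in $\partial\nef(Y)\cap\partial\nef(X)$ by Corollary \ref{weak}, and since both $\partial\nef(Y)$ and $\partial\nef(X)$ are closed, so is their intersection, and the limit $D$ lies in it too. This density step is really the only obstacle; everything else is formal.

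Part (\rom4) then follows from (\rom3) by a standard convex--geometry argument. For $n\ge 4$ the Lefschetz hyperplane theorem identifies $N^1(X)$ with $N^1(Y)$ via restriction, and one always has $\nef(X)\subset\nef(Y)$ because the restriction of a nef divisor is nef. By (\rom3) we also have $\partial\nef(X)\subset\partial\nef(Y)$. To promote this to equality, I would suppose for contradiction that some $D\in\nef(Y)\setminus\nef(X)$ exists, pick an ample class $A\in\inte\nef(X)\subset\inte\nef(Y)$, and examine the segment from $A$ to $D$: it must cross $\partial\nef(X)$ at an interior point $B$ of the segment. Then $B\in\partial\nef(X)\subset\partial\nef(Y)$, yet $B$ lies strictly between $A\in\inte\nef(Y)$ and $D\in\nef(Y)$, forcing $B\in\inte\nef(Y)$ by convexity---a contradiction.
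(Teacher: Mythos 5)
Your proposal is correct, but it reaches the conclusion by a somewhat different route than the paper. The paper gives no separate proof of this corollary: it is obtained by combining the statements of Corollary \ref{fano} and Corollary \ref{weak}; in that combination the points on the ray $k$ generated by $-K_X$ (which are invisible in case (2)) are already taken care of inside Corollary \ref{fano}, whose proof extends the conclusion from $\vis$ to all of $\partial\nef(X)$ by a segment argument exploiting the openness of $1\amp(X)$, and the passage to $\nef(Y)$ then uses the Demailly--Peternell--Schneider mechanism behind Corollary \ref{weak} (a divisor restricting to an ample divisor on $Y$ is $1$-ample on $X$, cf. Proposition \ref{inclusions}). You instead never invoke Corollary \ref{fano}: you apply Corollary \ref{weak} only on $\vis$ and absorb the ray $k$ by a density-and-closedness argument in $\partial\nef(X)$, which is valid because $\dim N^1X\ge 3$ makes $\partial\nef(X)$ locally of dimension $\ge 2$ at nonzero points, the relevant cones $\inte\bigl(\mob(X)\bigr)$ and $\bigc(X)$ are open, and the condition $D\vert_Y\in\partial\nef(Y)$ is closed (preimage of a closed set under the continuous restriction map). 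Note that both your argument and the paper's rest on the same identification of the visible part ($\vis=\partial\nef(X)$ when $-K_X$ is ample, $\vis=\partial\nef(X)\setminus k$ when $-K_X$ is nef, nonzero and non-ample), which is exactly what the paper asserts in its proof of Corollary \ref{fano}, so you are not introducing any assumption beyond what the paper itself uses. Finally, your explicit convexity argument for part (\rom4) --- combining $\nef(X)\subset\nef(Y)$, the inclusion $\partial\nef(X)\subset\partial\nef(Y)$ from (\rom3), and a segment from an ample class to a hypothetical element of $\nef(Y)\setminus\nef(X)$ --- is a correct way of upgrading (\rom3) to the equality of cones that the paper leaves implicit.
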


\begin{remark} The statement of Corollary \ref{weakfano}(\rom4) for $X$ Fano was originally proven by 
Wi\'sniewski \cite[p. 147 Corollary]{Wis}. This provided the starting--block for much further work concerning weak Lefschetz for the ample cone (\cite{Has}, \cite{Jow}, \cite{ANO}, \cite{AO}, \cite{BI}, \cite{Sz}).
\end{remark}

\section{$q$-ample}

This section is about the cone of $q$-ample divisors. We prove the result stated in the introduction:

\begin{theorem} \label{qample} Let $X$ be a smooth projective variety of dimension $n$.  For any non--negative integer $q$, we have

\[   \vis\cap B(n-1-q)\amp(X)\subset \partial\overline{q\amp(X)}\ .\]
\end{theorem}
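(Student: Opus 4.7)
The plan is to follow the proof of Theorem~\ref{1amp}(i) closely, upgrading the dimension--one fiber argument (Theorem~\ref{dim1}) to the more flexible Wi\'sniewski--Ionescu length inequality (Theorem~\ref{wis}); the latter is precisely the tool needed to control fiber dimensions of extremal contractions in higher codimension. The case $q=0$ is tautological since $0\amp(X)=\amp(X)$ and $\vis\subset\partial\nef(X)$, so throughout I assume $q\ge 1$.

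First I would reduce to an adjoint divisor. Suppose for contradiction that some $D\in\vis\cap B(n-1-q)\amp(X)$ lies in $q\amp(X)$. Because $q\amp(X)$ and $B(n-1-q)\amp(X)$ are open cones, $\vis^{\circ}$ is dense in $\vis$, and stable divisors are dense in $N^1(X)$ by Proposition~\ref{dense}, the perturbation argument used to derive Proposition~\ref{aux} from the statement of Theorem~\ref{1amp}(i) carries over verbatim and lets one replace $D$ with a stable divisor of the form $L=K_X+A$, $A$ an ample $\RR$--divisor, that still lies in $q\amp(X)\cap B(n-1-q)\amp(X)$. The task is then reduced to proving the analogue of Proposition~\ref{aux}: \emph{any such $L$ must be ample.}

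Next I would run an $L$--MMP via \cite{BCHM}; this exists and terminates in a minimal model $\phi\colon X\dashrightarrow X_{\min}$ because $B(n-1-q)\amp(X)\subset\bigc(X)$ forces $L$ to be big. By Lemma~\ref{indet}, $\exc(\phi)\subset B_+(L)$, so $\dim\exc(\phi)\le n-1-q\le n-2$, and every step of the program must be a small contraction. Let $\psi\colon X\to Z_0$ be the first such contraction; it contracts a $K_X$--negative extremal ray $R$ because $K_X<L$. Pick an irreducible component $E$ of $\exc(\psi)$ and an irreducible component $F$ of a fiber of $\psi|_E$. Theorem~\ref{wis} gives
\[\dim E+\dim F\ge n+\ell(R)-1\ge n,\]
and together with $\dim E\le\dim\exc(\phi)\le n-1-q$ this forces $\dim F\ge q+1$. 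Now $-L|_F$ is $\psi$--ample, hence ample on $F$, so Theorem~\ref{n-1} gives $L|_F\notin(\dim F-1)\amp(F)$; but $L|_F$ inherits $q$--ampleness from $L$, and the nested inclusion $q\amp(F)\subset(\dim F-1)\amp(F)$ (valid because $q\le\dim F-1$) produces a contradiction. Hence no such $\psi$ exists and $L$ is already nef. Since $L$ is stable and $B_-(L)=\emptyset$ for any nef divisor (as $B(L+A)=\emptyset$ for every ample $A$), we conclude $B_+(L)=B_-(L)=\emptyset$ and $L$ is ample, contrary to $D\in\partial\nef(X)$.

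The main obstacle is the combinatorial matching of the two dimension bounds so that $\dim F\ge q+1$ rather than merely $\dim F\ge 2$; this is exactly what dictates the shape of the hypothesis $B(n-1-q)\amp(X)$ and is the decisive place where Wi\'sniewski--Ionescu delivers more than was required in the $1$--ample case. A secondary issue, used tacitly throughout, is that $q$--ampleness restricts well to closed subvarieties---this is the same property invoked in the proof of Theorem~\ref{1amp}(i) and so needs no new input.
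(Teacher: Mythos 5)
Your proposal is correct and takes essentially the same approach as the paper: the paper deduces Theorem \ref{qample} from the sharper Theorem \ref{qample2}(\rom1) involving the minimal length $\tau$ of $K_X$--negative extremal rays, but the mechanism --- perturbing to a stable adjoint divisor $K_X+A$, running the BCHM $L$--MMP, bounding the exceptional locus inside $B_+(L)$ via Lemma \ref{indet}, and playing the Wi\'sniewski--Ionescu inequality (Theorem \ref{wis}) against restriction of $q$-ampleness and Theorem \ref{n-1} --- is exactly what you do. Your use of $\ell(R)\ge 1$ is precisely the specialization $\tau\ge 1$ of the paper's argument (bigness of $L$ sparing you the Mori fibre space case), so nothing essential differs.
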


We actually prove a more general statement:

\begin{theorem}\label{qample2} Let $X$ be a smooth projective variety of dimension $n$, and define
  \[ \tau=\min\{ \ell(R)\vert \hbox{\ $R$\ is\ a\ $K_X$-negative\ extremal\ ray}\}\ .\]

\item{(\rom1)} For any non--negative integer $q$ such that $q\ge\tau-2$, we have

\[   \vis\cap B(n+\tau-q-2)\amp(X)\subset \partial\overline{q\amp(X)}\ .\]

\item{(\rom2)} Suppose $X$ is not the blow--up of a smooth variety $Y$ along a smooth subvariety of codimension $\ge 2$.
Then
  \[   \vis\cap \bigc(X)\subset \partial\overline{(\tau)\amp(X)}\ .\]
 \end{theorem}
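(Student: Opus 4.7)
The plan is to imitate the proof of Theorem \ref{1amp}. For each part I would reduce to an auxiliary proposition asserting that any $L = K_X + A$ with $A$ ample, $L$ stable, and $L$ in the relevant open cone must be nef (and hence ample, since $B_-(L) = \emptyset$ for nef $L$). The reduction is the perturbation--plus--density argument used for Proposition \ref{aux}: slide the putative counterexample into $\vis^{\circ}$, write it as $m(K_X+A)$ by the definition of $\vis^{\circ}$, shrink $A$ slightly to $K_X+(1-\epsilon)A$ (staying in the open cones $q\amp(X)$ and $B(n+\tau-q-2)\amp(X)$, resp.\ $\tau\amp(X)$ and $\bigc(X)$), and use Proposition \ref{dense} to achieve stability. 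Ampleness of the perturbed divisor will then contradict its lying in $\partial\nef(X)$.

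To prove the proposition for part (i), I would follow the three-case split of Proposition \ref{aux3} according to whether $L$ is big, pseudo-effective but not big, or not pseudo-effective. The non-big cases only arise when $q = \tau - 2$, since $B(n+\tau-q-2)\amp(X) \subset \bigc(X)$ as soon as $q \geq \tau - 1$; they are handled in the familiar way, by approximation in the psef case and by invoking the Mori fibre space of \cite[Corollary 1.3.2]{BCHM} in the non-psef case (the general fiber then has dimension $\geq \ell(R) - 1 \geq \tau - 1 = q+1$, giving the usual contradiction via Theorem \ref{n-1}). In the principal big case, I would run an $L$-MMP and analyse its first contraction $\psi\colon X \to Z$ of a $K_X$-negative extremal ray $R$ with $\ell(R) \geq \tau$. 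For a small $\psi$, Theorem \ref{wis} gives an irreducible fiber component $F$ with
\[ \dim F \;\geq\; n + \ell(R) - 1 - \dim\exc(\psi) \;\geq\; q+1, \]
where the bound on $\dim\exc(\psi)$ comes from Lemma \ref{indet} (yielding $\dim\exc(\psi) \leq n+\tau-q-2$) when $q \geq \tau$, and from the automatic $\dim\exc(\psi) \leq n-2$ when $q \leq \tau-1$; since $-L\vert_F$ is ample and $\dim F - 1 \geq q$, Theorem \ref{n-1} forces $L\vert_F \notin q\amp(F)$, contradicting the $q$-ampleness of $L$. For a divisorial $\psi$, Lemma \ref{indet} restricts us to $q \leq \tau - 1$, and $\dim F \geq \tau \geq q+1$ yields the same contradiction. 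Hence no contraction is possible and $L$ must be ample.

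For part (ii), the divisor $L$ is big and every MMP contraction is birational; the small case is ruled out as above, producing $\dim F \geq \tau + 1$ and contradicting $L \in \tau\amp(X)$. The main obstacle is the divisorial case: Theorem \ref{wis} only guarantees $\dim F \geq \tau$, so one must separately treat the borderline situation where every fiber of $\psi$ is equidimensional of dimension exactly $\tau$ (in which case $\tau\amp(F) = N^1(F)$ trivially and the restriction argument breaks down). Here I would invoke an Ando-type structure theorem for equidimensional $K_X$-negative divisorial contractions---a higher-length generalisation of Theorem \ref{dim1}; see e.g.\ \cite[Theorem 4.1]{AW}---which forces $\psi\colon X \to Z$ to be the blow-up of the smooth variety $Z$ along a smooth subvariety of codimension $\tau + 1 \geq 2$. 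This contradicts the hypothesis of (ii) and finishes the proof.
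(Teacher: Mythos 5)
Your proposal is correct and follows essentially the same route as the paper: the same reduction to an auxiliary statement about stable adjoint divisors $K_X+A$, the same analysis of the first step of a BCHM $L$--MMP via Lemma \ref{indet}, Wi\'sniewski's inequality (Theorem \ref{wis}) and Theorem \ref{n-1} applied to $L\vert_F$, and for part (\rom2) the same appeal to a structure theorem for the equidimensional divisorial contraction with fibres of dimension exactly $\tau$. The only adjustments: the correct reference for that structure theorem is Andreatta--Occhetta \cite[Theorem 5.1]{AO2} rather than \cite[Theorem 4.1]{AW} (the latter concerns fibres of dimension at most one), and where you use stability to pass from nef to ample in part (\rom2), the paper instead drops stability and concludes by perturbing to $K_X+(1-\epsilon)A$ --- both endings work.
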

 
 \begin{proof}
 
 \item{(\rom1)} As in the proof of Theorem \ref{1amp}, one can restrict attention to the relative interior $\vis^{\circ}$ and hence it suffices to prove the following:
 
 \begin{proposition}\label{auxi} Let $L$ be a divisor of the form $L=K_X+A$, with $A$ an ample $\RR$-divisor. Suppose $L$ is stable and
   \[  L\in B(n+\tau-q-2)\amp(X)\cap q\amp(X)\ .\]
 Then $L$ is ample.
 \end{proposition}
 
To prove the proposition, consider an $L$--MMP
  \[\phi\colon  X=X_0- \to X_1- \to  \cdots - \to X_{min}\ ,\]
  where either $\phi_\ast L$ is semi--ample on $X_{min}$ (if $L$ is big), or there exists a Mori fibre space structure on $X_{min}$ (if $L$ is not pseudo--effective). This exists thanks to \cite{BCHM}.
Let 
  \[ \psi\colon X\to Z\]
  denote the first contraction of the $L$--MMP, let $V\subset X$ denote the exceptional locus of $\psi$ and let $F$ be a general fibre of $\psi\vert_V$.  Note that $K_X<L$ so that $\psi$ corresponds to the contraction of a $K_X$--negative extremal ray and 
  Wi\'sniewski's theorem (Theorem \ref{wis}) applies. This gives
  \[ \dim V +\dim F\ge n+\tau-1\ .\]
Since $V\subset B_+(L)$ (Lemma \ref{indet}), its dimension is $\le n+\tau-q-2$. It follows that
  \[ \dim F\ge q+1\ .\]
By construction, $-L$ is $\psi$--ample, hence
  \[ L\vert_F \in -\amp(F)\ .\]
On the other hand the restriction of $L$ to any subvariety is $q$--ample, so in particular
  \[ L\vert_F\in q\amp(F)\ .\]
 But this is not possible if $\dim F\ge q+1$:
   \[ q\amp(F)\subset (\dim F -1)\amp(F)\ ,\]
  and the cone $(\dim F-1)\amp(F)$ is the complement of $-\psef(F)$: contradiction.
  
 Since the $L$--MMP cannot get started, it is trivial. That is, either $L$ is nef on $X$, or there exists a contraction of fibre type
   \[ g\colon X\to Z\]
which is $L$--negative and $K_X$--negative. The second possibility can be excluded, again using Wi\'sniewski's theorem: if $F$ is a general fibre of $f$, 
we have
  \[ n+\dim F   \ge n+\tau-1\ ,\]
i.e. there is a fibre $F$ of dimension $\ge \tau-1$. But supposing there is a fibre type contraction, $L$ is not big which is only possible if $q=\tau-2$. So $L$ and the restriction $L\vert_F$ are $(\tau-2)$--ample, which contradicts the fact that
  \[ L\vert_F\in -\amp(F)\subset -\bigc(F)\ .\]

 \item{(\rom2)} This follows once we have proven the following:
 
 \begin{proposition}\label{auxi2} Let $X$ be as in Theorem \ref{qample2}(\rom2), and let $L$ be a divisor of the form $L=K_X+A$, with $A$ an ample $\RR$-divisor. Suppose $L$ is big and $\tau$-ample. Then $L$ is ample.
 \end{proposition}
 
 To prove the proposition, we apply \cite[]{BCHM} to get an $L$--MMP
   \[\phi\colon  X=X_0- \to X_1- \to  \cdots - \to X_{min}\ ,\]
   where $\phi_\ast L$ on $X_{min}$ is nef. Consider the first contraction
   \[\psi\colon X\to Z\]
   in this $L$--MMP. As above, let $V\subset X$ denote the exceptional locus of $\psi$ and let $F$ be a general fibre of $\psi\vert_V$.  Note that $K_X<L$ so that $\psi$ corresponds to the contraction of a $K_X$--negative extremal ray and hence
  Wi\'sniewski's theorem (Theorem \ref{wis}) applies to $\psi$.
  If $\psi$ is a small contraction (i.e. $\dim V\le n-2$), Wi\'sniewski's theorem gives
    \[   \dim F \ge \tau+1\ ,\]
   and we get a contradiction with the fact that $L\vert_F$ is $\tau$-ample. So $\psi$ must be a divisorial contraction, and all fibres of $\psi\vert_V$ must be of dimension equal to $\tau$ (by Wi\'sniewski's theorem, each fibre has dimension $\ge \tau$, while the fact that $L$ is $\tau$-ample implies that each fibre has dimension $\le \tau$). In this case, a result of  
 Andreatta--Occhetta \cite[Theorem 5.1]{AO2} informs us that $\psi$ identifies $X$ with a blow--up of
some smooth projective variety $Y$ along a smooth subvariety; this is excluded by hypothesis. 

Altogether, we find there can be no contraction and hence $X=X_{min}$ and $L$ is already nef. It remains to prove ampleness of $L$. To this end, note that
  \[  L^\prime=K_X+(1-\epsilon)A\]
  is still big and $(\tau-1)$-ample for $\epsilon$ sufficiently small (since $\bigc(X)$ and $(\tau-1)\amp(X)$ are open cones). Applying the above reasoning to $L^\prime$, we find that $L^\prime$ is nef. But then
  \[  L=L^\prime + \epsilon A\]
  is ample.
\end{proof}

 \begin{corollary}{("weak Lefschetz")} Let $X$ and $\tau$ be as in Theorem \ref{qample2}. 
  
 \item{(\rom1)} Let $Y\subset X$ be a generic complete intersection of codimension $q\le n-2$. Then
 \[   \vis\cap B(n+\tau-q-2)\amp(X)\subset  \partial\nef(Y)\cap\partial\nef(X)\ .\]
 
 \item{(\rom2)} Suppose $X$ is not the blow--up of a smooth variety along a smooth subvariety of codimension $\ge 2$. Let $Y\subset X$ be a generic complete intersection of codimension $\tau$. Then
   \[   \vis\cap \bigc(X)\subset   \partial\nef(Y)\cap\partial\nef(X)\ .\]
\end{corollary}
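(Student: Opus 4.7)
The plan is to follow the pattern of Corollary \ref{weak}, substituting Theorem \ref{qample2} for Theorem \ref{1amp} and using Proposition \ref{inclusions} in the role played there by the $q=1$ result of Demailly--Peternell--Schneider. The argument is entirely contrapositive.

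Fix a divisor $D$ lying in the left--hand side of the claimed inclusion. Since $D\in\vis\subset\partial\nef(X)$, the factor $\partial\nef(X)$ on the right--hand side is immediate; moreover $D$ is nef, hence $D\vert_Y$ is nef on any smooth $Y\subset X$. The remaining content of both (\rom1) and (\rom2) is then the assertion that $D\vert_Y$ fails to be ample, and I would establish this by contradiction.

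Suppose therefore that $D\vert_Y$ is ample. By hypothesis $Y=h_1\cap\cdots\cap h_q$ (with $q=\tau$ in case (\rom2)), where the $h_i\in\vert H_i\vert$ are generic members of very ample classes $H_1,\ldots,H_q$ on $X$. Ampleness of $D\vert_Y$ is then precisely the statement that $D$ is $(H_1,\ldots,H_q)$-ample, hence H $q$-ample, and Proposition \ref{inclusions} then yields $D\in q\amp(X)$. On the other hand Theorem \ref{qample2}(\rom1) gives in case (\rom1) that $D\in\partial\overline{q\amp(X)}$; in case (\rom2) Theorem \ref{qample2}(\rom2) applies with $q$ replaced by $\tau$, giving $D\in\partial\overline{\tau\amp(X)}$. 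Since the cone $q\amp(X)$ (respectively $\tau\amp(X)$) is open, its boundary is disjoint from the cone itself, producing the desired contradiction. Thus $D\vert_Y\not\in\amp(Y)$, and since $D\vert_Y$ is nef this gives $D\vert_Y\in\partial\nef(Y)$, as required.

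No serious obstacle arises: the statement is a direct translation of Theorem \ref{qample2} via Proposition \ref{inclusions}. The only point worth flagging is that one must match the notion of ``generic complete intersection of codimension $q$'' appearing in the statement with the definition of $(H_1,\ldots,H_q)$-ampleness from Section 1, but these two notions fit together tautologically, so nothing extra is needed.
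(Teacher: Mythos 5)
Your argument is correct and is exactly the paper's own (very terse) proof spelled out: the paper simply observes that the corollary is immediate from Theorem \ref{qample2} together with Proposition \ref{inclusions} (H $q$-ample implies $q$-ample), which is precisely your contrapositive via the openness of the $q$-ample cone. No further comment is needed.
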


\begin{proof} This is immediate from Theorem \ref{qample}, once one knows that $H q$-ample implies $q$-ample (Proposition \ref{inclusions}).
\end{proof}

\end{document}